\newcommand{\map}{\phi}
\newcommand{\tth}{^{\operatorname{th}}}
\newcommand{\ZZ}{\mathbb{Z}}
\newcommand{\QQ}{\mathbb{Q}}
\newcommand{\PP}{\mathbb{P}}
\newcommand{\RR}{\mathbb{R}}
\newcommand{\CC}{\mathbb{C}}
\newcommand{\Aff}{\mathbb{A}}
\newcommand{\pp}{\mathfrak{p}}
\newcommand{\qq}{\mathfrak{q}}
\newcommand{\OO}{\mathcal{O}}
\newcommand{\pc}[1]{Y^{\operatorname{pre}}\left(#1\right)}
\newcommand{\cpc}[1]{X^{\operatorname{pre}}\left(#1\right)}
\renewcommand{\bar}[1]{#1\llap{$\overline{\phantom{\rm#1}}$}}
\newcommand{\col}{\,{:}\,}
\renewcommand{\setminus}{\smallsetminus}
\newtheorem{thm}{Theorem}[section]
\newtheorem*{thm*}{Theorem}
\newtheorem{lem}[thm]{Lemma}
\newtheorem{cor}[thm]{Corollary}
\newtheorem{prop}[thm]{Proposition}
\newtheorem*{conjecture*}{Conjecture}
\theoremstyle{remark}
\newtheorem{remark}[thm]{Remark}
\newtheorem{example}[thm]{Example}
\theoremstyle{definition}
\newtheorem{define}[thm]{Definition}
\begin{document}

\title[Uniform Bounds on Pre-Images]{Uniform Bounds on Pre-Images
under Quadratic Dynamical Systems}

\author[Faber]{Xander Faber}
\address{
Department of Mathematics and Statistics \\ 
McGill University \\
Montr\'eal, QC   H3A 2K6 \\ CANADA
}
\email{xander@math.columbia.edu}
\urladdr{http://www.math.columbia.edu/$\sim$xander/}

\author[Hutz]{Benjamin Hutz}
\address{
Department of Mathematics and Computer Science \\
Amherst College \\
Amherst, MA 01002 \\ USA
}
\email{bhutz@amherst.edu}

\author[Ingram]{Patrick Ingram}
\address{
Department of Pure Mathematics \\
University of Waterloo \\
Waterloo, ON  N2L 3G1 \\ CANADA
}
\email{pingram@math.uwaterloo.ca}

\author[Jones]{Rafe Jones}
\address{
Department of Mathematics \\
University of Wisconsin \\
Madison, WI  53706 \\ USA
}
\email{jones@math.wisc.edu}

\author[Manes]{Michelle Manes}
\address{
Department of Mathematics \\
University of Hawaii \\
Honolulu, HI  96822 \\ USA
}
\email{mmanes@math.hawaii.edu}

\author[Tucker]{Thomas J. Tucker}
\address{
Department of Mathematics \\
University of Rochester \\
Rochester, NY  14627 \\ USA
}
\email{ttucker@math.rochester.edu}

\author[Zieve]{Michael E. Zieve}
\address{
Department of Mathematics \\
Rutgers University \\
Piscataway, NJ 08854 \\ USA
}
\email{zieve@math.rutgers.edu}
\urladdr{http://www.math.rutgers.edu/$\sim$zieve/}


\subjclass[2000]{
14G05,
11G18
(primary);
37F10
(secondary)}

\begin{abstract}
For any elements $a,c$ of a number field $K$, let $\Gamma(a,c)$ denote
the backwards orbit of $a$ under the map $f_c\colon\CC\to\CC$ given by
$f_c(x)=x^2+c$.  We prove an
upper bound on the number of elements of $\Gamma(a,c)$ whose degree
over $K$ is at most some constant $B$.  This bound depends only on
$a$, $[K:\QQ]$, and $B$, and is valid for all $a$ outside an explicit
finite set.  We also show that, for any $N\ge 4$ and any $a\in K$
outside a finite set, there are only finitely many pairs $(y_0,c)\in\CC^2$
for which $[K(y_0,c)\col K]<2^{N-3}$
and the value of the $N\tth$ iterate of $f_c(x)$ at $x=y_0$ is $a$.
Moreover, the bound $2^{N-3}$ in this result is optimal.
\end{abstract}


\thanks{The authors thank the American Institute of Mathematics for supporting
this project in its initial stages.  The research of the third author was
partially supported by a PDF grant from NSERC of Canada.}

\maketitle


\section{Introduction}
\label{Introduction}


\subsection{Bounding the Number of Pre-Images}

For an elliptic curve $E$ over a number field $K$, the Mordell--Weil theorem
implies finiteness of the group $E_{\text{tors}}(K)$ of $K$-rational torsion
points on $E$.  Merel~\cite{Merel}, building on work of Mazur, Kamienny, and
others, proved that $\#E_{\text{tors}}(K)$ is bounded by a function of
$[K\col\QQ]$ (uniformly over all $K$ and $E$).  This implies the following
uniform bound on torsion points over extensions of $K$ of bounded
degree (see \cite[Cor.~6.64]{ads}):

\begin{thm} \label{Thm: Ell Curves}
Fix positive integers $B$ and $D$.  There is an integer $\lambda(B,D)$
such that for any number field $K$\,with
$[K\col\QQ] \leq D$, and for any elliptic curve $E/K$, we have
\[
\#\{P \in E(\bar K): [K(P)\col K]\le B \text{ and } [N]P = \OO
 \text{ for some $N \ge 1$}\} \le \lambda(B,D).
\]
\end{thm}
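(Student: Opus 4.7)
The plan is to reduce the statement directly to Merel's theorem by enlarging the base field as needed. Fix $B$ and $D$, let $K$ be any number field with $[K\col\QQ] \le D$, and let $E/K$ be any elliptic curve. If $P \in E(\bar K)$ is a torsion point with $[K(P)\col K] \le B$, set $L = K(P)$; then $L$ is a number field of degree at most $BD$ over $\QQ$, and $P \in E(L)_{\text{tors}}$. Applying Merel's theorem to $L$ produces an integer $M = M(BD)$, depending only on the product $BD$, such that the order of every torsion point of $E(L)$ is bounded by $M$. In particular, every $P$ in the set we wish to count has order at most $M$.

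Once the order of every such $P$ is bounded by $M$, the counting step is immediate. Setting $N = \operatorname{lcm}(1,2,\ldots,M)$, all such $P$ lie in $E[N](\bar K)$. Since $E[N](\bar K) \cong (\ZZ/N\ZZ)^2$, this subgroup has cardinality exactly $N^2$, and we may take $\lambda(B,D) := N^2$, a quantity depending only on $B$ and $D$ as required. Note that we do not need to keep track of which points lie in which intermediate extension of $K$: the only feature of $P$ used is that $K(P)$ has degree at most $BD$ over $\QQ$, and this is enough to uniformly bound the order of $P$ via Merel.

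The only serious input to this argument is Merel's uniform bound on torsion; given that deep theorem, the remainder is essentially one line of group theory. In that sense, there is no significant obstacle to overcome beyond invoking Merel, which is why the statement can be quoted from the textbook reference \cite[Cor.~6.64]{ads}. The more substantive work of the present paper lies in proving an analogous uniform bound for pre-images under $f_c(x)=x^2+c$, where no Merel-type result is available and the argument must proceed by genuinely different methods.
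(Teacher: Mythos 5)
Your proof is correct and is essentially the standard reduction from Merel's theorem that the paper is implicitly invoking when it cites \cite[Cor.~6.64]{ads}: pass to $L=K(P)$ of degree $\le BD$ over $\QQ$, apply Merel to bound the order of $P$ uniformly by some $M(BD)$, and then count $\#E[\operatorname{lcm}(1,\dots,M)] = \operatorname{lcm}(1,\dots,M)^2$. The paper does not spell this argument out, but your reconstruction matches the intended one.
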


From a dynamical perspective, Theorem~\ref{Thm: Ell Curves} controls the number
of bounded-degree pre-images of the point $\OO$ under the various maps
$[N]\colon E\to E$.  In this paper we prove an analogue of this
result for maps $\Aff^1\to\Aff^1$ defined by the iterates of a degree-$2$
polynomial $f\in \bar\QQ[x]$.  Write $f^N$ for the $N\tth$ iterate of the
polynomial $f$.  A height argument similar to the one used by Mordell and Weil
shows that, for any number field $K$, any quadratic $f\in K[x]$, and any
$a\in K$ and $B>0$, the set
\[
\{x_0\in\bar K: [K(x_0)\col K]\le B \text{ and } f^N(x_0)=a
 \text{ for some } N\ge 1\}
\]
is finite.  The sizes of these sets cannot be bounded in terms of
$K$, $a$, and $B$: for any $N\ge 1$, put $f(x):=(x-b)^2+b$ where
$b:=a-2^{2^N}$, and note that $f^N(b+2)=a$.  However, we will prove
such a bound on these sets in case $f$ varies over the family of
polynomials
\[
f_c(x) := x^2 + c.
\]

\begin{thm} \label{Thm main intro}
Fix positive integers $B$ and $D$.  For all but finitely many values
$a\in\bar{\QQ}$, there is an integer $\kappa(B,D,a)$ with the following
property: for any number field $K$\,such that
$[K\col\QQ]\le D$ and $a\in K$, and for any $c\in K$, we have
\[
\#\left\{ x_0 \in \bar{\QQ} : [K(x_0)\col K] \leq B \text{ and }
 f_c^N(x_0) = a \text{ for some } N\ge 1 \right\} \leq \kappa(B,D,a).
\]
\end{thm}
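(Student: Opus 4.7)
The plan is to reinterpret the set we wish to bound in terms of points on a tower of algebraic curves. For fixed $a\in\bar\QQ$ and each $N\ge 1$, let $\pc{N}$ denote the affine plane curve in the $(c,x_0)$-plane cut out by $f_c^N(x_0)=a$, and write $\cpc{N}$ for its smooth projective model. A preimage $x_0$ satisfying $f_c^N(x_0)=a$ with $c\in K$ and $[K(x_0)\col K]\le B$ corresponds to a point of $\pc{N}$ whose coordinates generate a number field of degree $\le BD$ over $\QQ$. The theorem thus reduces to a uniform bound, summed across all $N\ge 1$, on the number of such low-degree points on $\pc{N}$.

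The main geometric input I would pursue is an exponentially growing lower bound on the gonality of $\cpc{N}$. First I would show that, for all but finitely many $a\in\bar\QQ$, each $\pc{N}$ is geometrically irreducible; the exceptional $a$ are the finitely many values forced into a periodic or otherwise degenerate configuration under some $f_c$, and should be identifiable by a direct analysis of when $f_c^N(x)-a$ splits over $\bar\QQ(c)$. Combining this irreducibility with a Riemann--Hurwitz analysis of the natural tower $\cdots\to\cpc{N}\to\cpc{N-1}\to\cdots$, together with Castelnuovo--Severi to preclude unwanted low-degree maps to $\PP^1$ that do not factor through the tower, I would aim to establish $\gamma(\cpc{N})\ge 2^{N-3}$, or some comparable exponential lower bound. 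The strength of this estimate is precisely what the companion optimal bound stated in the abstract suggests.

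With the gonality estimate in hand, a Faltings--Frey-type theorem on points of bounded degree on curves---asserting that a curve of genus $\ge 2$ whose gonality exceeds $2d$ has only finitely many points of degree $\le d$, lying on finitely many exceptional loci controlled by the Jacobian---implies that once $2^{N-3}>2BD$ the curve $\cpc{N}$ contributes at most $\kappa_0(B,D,a)$ points of degree $\le BD$, coming from a fixed collection of special sub-loci inside $\jpc{N}$ that one would enumerate. For the remaining finitely many levels $N\le N_0(B,D,a)$, the trivial bound of $2^N$ preimages per level gives a total contribution of at most $2^{N_0+1}$. Summing the two contributions yields the required uniform constant $\kappa(B,D,a)$.

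The main obstacle will be the gonality estimate. A direct genus computation on $\cpc{N}$ should yield $g(\cpc{N})\to\infty$, which via Faltings--Mordell suffices for \emph{level-by-level} finiteness but does not produce a cutoff beyond which $\cpc{N}$ contains \emph{no} points of degree $\le BD$ at all. Sharpening genus to the gonality statement $\gamma(\cpc{N})\ge 2^{N-3}$---and doing so uniformly, while carefully bookkeeping the exceptional values of $a$ for which the tower misbehaves---will likely demand a detailed study of the branch loci of the covers $\cpc{N}\to\cpc{N-1}$ and of the propagation of low-degree maps up the tower.
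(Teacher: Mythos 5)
Your approach of computing the gonality of $\cpc{N,a}$ and invoking a Faltings--Frey theorem on points of bounded degree is a viable alternative to the paper's route, which instead applies Vojta's arithmetic-discriminant inequality (in the Song--Tucker form, Theorem~\ref{stthm}) to the individual degree-$2$ maps $\delta_M$ in the tower, using $\rho(\delta_M)=2^{M-3}$ rather than gonality directly. The two routes bottom out at essentially the same geometric computation (Riemann--Hurwitz for the genus, Castelnuovo for gonality vs.\ ramification counts for $\rho$) and at comparably deep, noneffective arithmetic inputs; they even produce the same cutoff level $N_0\approx 4+\log_2(BD)$. Your description of the exceptional $a$ as arising from degeneracies in $f_c^N(x)-a$ over $\bar\QQ(c)$ is correct in spirit; the precise condition (Proposition~\ref{smooth}) is that $a$ not be a critical value of $c\mapsto f_c^j(0)$ for $2\le j\le N_0$. (Your guessed gonality $2^{N-3}$ is off by one; it is actually $2^{N-2}$, so the cutoff works.)

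However, there is a genuine gap in your final step. A Frey-type theorem applied to $\cpc{N,a}$ gives, for each fixed $N$ with $2^{N-3}>BD$, \emph{finiteness} of the set of points of degree $\le BD$ --- but the cardinality of that finite set is not a priori bounded independently of $N$, and you must sum over infinitely many levels $N\ge N_0$. Asserting that each such level contributes at most a fixed $\kappa_0(B,D,a)$ is precisely what is not justified. The paper closes this gap with a canonical-height argument (Lemma~\ref{heights}): once the finitely many low-degree points of $\pc{N_0,a}$ are known, every low-degree preimage at a level $N>N_0$ maps under $\delta_{N_0+1}\circ\cdots\circ\delta_N$ to one of them, so the parameter $c$ is confined to a finite list; for each such fixed $c$, the canonical height $\hat h$ attached to $f_c$ shows that the full backward orbit of $a$ has bounded height, hence only finitely many elements of bounded degree. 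Without this (or an equivalent device), the per-level finiteness does not yield the uniform constant $\kappa(B,D,a)$ the theorem requires. Adding that height argument would complete your proof along a path genuinely parallel to, but distinct from, the paper's.
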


Further, we give an explicit description of the excluded values $a$: they
are the critical values of the polynomials $f_c^j(0)\in \ZZ[c]$, for
$2\le j\le 4+\log_2(BD)$.  It follows that the number of such values is less
than $16BD$, and we will show that these values do not have the form
$\alpha/m$ with $\alpha$ an algebraic integer and $m$ an odd integer.  We do
not know whether the result would remain true if we did not exclude these
finitely many values $a$.  We prove that this is the case if $B=D=1$
(see Theorem~\ref{Thm: No Exceptional Points}).

We do not assert any uniformity in $a$ in Theorem~\ref{Thm main intro}, and
in fact such uniformity cannot hold (since $a$ can be chosen as $f_c^N(x_0)$
for fixed $c,N,x_0$).  Also, our proof gives no explicit bound on the constant
$\kappa(B, D, a)$, since we use a noneffective result due to Vojta (which
generalizes the Mordell conjecture).  Our proof of
Theorem~\ref{Thm main intro} carries over immediately
to the family of polynomials $g_c(x):=x^k+c$ for any fixed $k\ge 2$;
it would be interesting to analyze other families of polynomials.

In a different direction, if we fix $N$ and vary $c$, the choices of $B$
and $D$ become crucial:
\begin{thm} \label{Thm cool intro}
Let $K$ be a number field and fix $a\in K$ and $N\ge 4$.  There is a
finite extension $L$ of $K$ for which infinitely many pairs
$(y_0,c)\in\bar{K}\times\bar{K}$ satisfy $f_c^N(y_0)=a$ and
$[L(y_0,c)\col L]\le 2^{N-3}$.  Conversely, if $a$ is not a critical
value of $f_c^j(0)$ for any $2\le j\le N$, then only finitely many pairs
$(y_0,c)\in \bar{K}\times\bar{K}$ satisfy $f_c^N(y_0)=a$ and
$[K(y_0,c)\col K]<2^{N-3}$.
\end{thm}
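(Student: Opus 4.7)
The plan is to exploit the natural projection
\[
\pi\colon C_N \to C_3, \qquad (y_0, c) \mapsto (f_c^{N-3}(y_0), c),
\]
where $C_k \subset \mathbb{A}^2$ denotes the pre-image curve $\{f_c^k(y) = a\}$; this $\pi$ is generically \'etale of degree~$2^{N-3}$. The central geometric input I would establish first is that $C_3$ has geometric genus equal to~$1$. A Riemann--Hurwitz computation for the degree-$8$ projection $C_3 \to \mathbb{A}^1_c$ makes this explicit: the finite branch locus consists of the $1+2+4=7$ values of $c$ at which $a$ equals one of the three critical values $f_c^j(0)$ of $f_c^3$ for $j = 1, 2, 3$, contributing total finite ramification $4 + 4 + 4 = 12$, while above $c = \infty$ the eight sheets fall into four monodromy orbits of length~$2$, contributing a further~$4$. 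Thus $2g(C_3) - 2 = -16 + 16 = 0$.

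Granting this, the existence half of the theorem follows quickly. Since $C_3$ is geometrically an elliptic curve, a finite extension $L/K$ can be chosen over which $C_3$ has an $L$-rational base point and positive Mordell--Weil rank, so that $C_3(L)$ is infinite. For each $(z, c) \in C_3(L)$, the polynomial $f_c^{N-3}(y) - z \in L[y]$ has degree~$2^{N-3}$, so each of its roots $y_0$ yields a pair $(y_0, c) \in C_N(\bar L)$ satisfying $[L(y_0, c)\col L] \le 2^{N-3}$; varying $(z, c)$ over $C_3(L)$ produces the required infinite family.

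For the upper bound I would argue by contradiction: suppose infinitely many $(y_0, c) \in C_N(\bar K)$ satisfy $[K(y_0, c)\col K] < 2^{N-3}$. Setting $z := f_c^{N-3}(y_0)$, the pair $(z, c)$ lies on $C_3$ with $[K(z, c)\col K] \le [K(y_0, c)\col K]$, and the strict inequality forces the fiber polynomial $f_c^{N-3}(y) - z$ to be reducible over $K(z, c)$. Using the hypothesis that $a$ is not a critical value of $f_c^j(0)$ for any $2 \le j \le N$, I would prove that the generic Galois group of the tower $f_c^{N-3}(y) = z$ over the function field $K(C_3)$ is the full iterated wreath product $C_2^{\wr(N-3)}$, so that the fiber polynomial is irreducible over $K(C_3)$. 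The locus on $C_3$ at which its specialization becomes reducible is then a proper closed subset, hence a finite subset of $C_3(\bar K)$, and its $\pi$-preimage consists of only finitely many pairs---a contradiction.

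The main obstacle I anticipate is this Galois calculation: proving maximality of the generic arboreal Galois group over $K(C_3)$ level by level in the pre-image tree requires the post-critical-obstruction machinery developed for iterated Galois representations by R.~Jones, Stoll, and others. The exclusion of $a$ as a critical value of $f_c^j(0)$ for $2 \le j \le N$ is exactly what prevents the forward orbit of~$0$ under each $f_c$ from meeting the backward orbit of~$a$, and such a meeting is the only obstruction that would cause the arboreal Galois group to collapse at some level of the tree.
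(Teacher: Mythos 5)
The existence half of your argument matches the paper's, except for one overlooked case: when $a$ is a critical value of $f_c^2(0)$ (i.e.\ $a=-1/4$), the set $\pc{3,a}$ is not an irreducible genus-$1$ curve but has two geometrically irreducible components of genus~$0$.  The theorem's first assertion carries no hypothesis excluding this, and the paper handles it by observing only that $\pc{3,a}$ always has \emph{some} geometrically irreducible component of genus $0$ or $1$, which after a finite extension acquires infinitely many rational points.  Your Riemann--Hurwitz count is correct in the nonsingular case but leaves this case uncovered.

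The converse direction is where the proposal breaks in an essential way.  You claim that generic irreducibility of $f_c^{N-3}(y) - z$ over the function field $K(\cpc{3,a})$---which you would establish via maximality of the arboreal Galois group---implies that the locus on $\cpc{3,a}$ of reducible specializations is ``a proper closed subset, hence finite.''  That is not so.  Reducibility over the \emph{arithmetic} residue field $K(z_0,c_0)$ is not a Zariski-closed condition in $(z_0,c_0)$; generic irreducibility over a function field yields only that the reducible specializations form a \emph{thin} set in the sense of Hilbert irreducibility, and thin subsets of a curve are generally infinite.  (Compare $y^2 - t$ over $\QQ(t)$: irreducible, yet the specialization is reducible at every square value of $t$.)  In the base case $N=4$ your argument amounts to asserting that $\cpc{4,a}(K)$ is finite because of the size of a Galois group; but that finiteness is Faltings' theorem, which no purely Galois-theoretic argument produces.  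For $N\ge 5$ the needed input is stronger still: the paper invokes the Song--Tucker generalization of Vojta's inequality on arithmetic discriminants (Theorem~\ref{stthm} and Lemma~\ref{stlemma}), applied to the tower $\cpc{N,a}\to\cdots\to\cpc{1,a}$ with $\rho(\delta_M)=2^{M-3}$ computed from Lemma~\ref{ramification}.  Your proposal contains no substitute for this arithmetic theorem, and the arboreal Galois machinery of Jones, Stoll, et al., whatever it establishes about generic Galois groups, cannot supply it.
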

In this result, some values $a$ must be excluded: for $a=-1/4$,
we will show that infinitely many pairs $(y_0,c)\in \bar\QQ\times\bar\QQ$
satisfy $f_c^N(y_0)=a$ and $[\QQ(y_0,c)\col \QQ]\le 2^{N-4}$.  Note that
$a=-1/4$ is the unique critical value of $f_c^2(0)=c^2+c=(c+1/2)^2-1/4$.
If we fix $c$ (and $N$ and $a$), then only finitely many $y_0\in\bar{\QQ}$
satisfy $f_c^N(y_0)=a$; thus the first part of Theorem~\ref{Thm cool intro}
would remain true if we required the occurring values of $c$ to be distinct.
We will discuss Theorem~\ref{Thm cool intro} further in the next subsection
after defining the analogues of modular curves for this problem.

A different dynamical analogue of Merel's result has been conjectured by
Morton and Silverman~\cite{morton_silverman}.
For a field $K$ and a non-constant endomorphism $\phi$ of a variety $V$
over $K$, define the set of preperiodic points for $\phi$ to be
\[
\operatorname{PrePer}(\phi) =
\{P \in V(\bar{K}): \,\phi^N(P)=\phi^M(P) \text{ for some }N>M\ge 0\}.
\]
In case $V$ is an elliptic curve and $\phi=[R]$ for some $R>1$,
the set $\operatorname{PrePer}(\phi)$ coincides with $V_{\text{tors}}(\bar K)$.
This motivates the following special case of the Morton--Silverman
conjecture:

\begin{conjecture*}
For any positive integer $D$, there is an integer $\mu(D)$ such that,
for all number fields $K$ of degree at most $D$ and all $c \in K$, we have
\[
\#(\operatorname{PrePer}(f_c) \cap \Aff^1(K)) \leq \mu(D).
\]
\end{conjecture*}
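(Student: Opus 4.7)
The plan is to split $\operatorname{PrePer}(f_c)\cap\Aff^1(K)$ into its periodic part and the pre-periodic tails, and to bound each uniformly in $D=[K\col\QQ]$. Concretely, I would aim for two intermediate statements: (P) every $K$-rational periodic point of $f_c$, for any $c\in K$ with $[K\col\QQ]\le D$, has period at most $N(D)$; and (T) if $x_0\in K$ is pre-periodic but not periodic, then the smallest $m\ge 0$ with $f_c^m(x_0)$ periodic is at most $M(D)$. Granting (P) and (T), the set in the conjecture is contained in the union, over the periodic $K$-points $a$, of their depth-$M(D)$ $K$-rational pre-images; since $f_c$ has degree $2$, each $a$ contributes at most $2^{M(D)+1}$ points, and the number of cycles is itself uniformly bounded by a count of periodic $K$-points of period $\le N(D)$.

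The hard core is (P), which I would attack through the dynatomic modular curves $Y_1^{\mathrm{dyn}}(n)$, obtained (after removing spurious components coming from proper divisors of $n$) from the dynatomic polynomials $\Phi_n(x,c)\in\ZZ[x,c]$; their $K$-points parametrize pairs $(c,x_0)\in K^2$ with $x_0$ of exact period $n$ for $f_c$. The geometric genera of these curves grow rapidly with $n$, so for each fixed $K$, Faltings' theorem already forces $Y_1^{\mathrm{dyn}}(n)(K)$ to be finite once $n$ is large. What (P) demands is the much stronger, uniform, Mazur--Merel-type statement: for $n>N(D)$ and for every number field $K$ of degree $\le D$, every $K$-rational point of $Y_1^{\mathrm{dyn}}(n)$ should be cuspidal or lie on a trivial component.

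For (T), I would lean directly on the paper's uniform pre-image bounds. Given (P), the set $W$ of periodic $K$-points is finite with size bounded in terms of $D$, and for each $a\in W$ Theorem~\ref{Thm main intro} (applied with $B=1$) bounds the number of $K$-rational iterated pre-images of $a$ by $\kappa(1,D,a)$, provided $a$ avoids the explicit list of critical values of the polynomials $f_c^j(0)$. The finitely many remaining values of $a$ would be handled either by a direct analysis of the pre-image curves $\cpc{N}$ near their ramification locus, or, in the $B=D=1$ case, by the companion result mentioned in the excerpt for the absence of exceptional points over $\QQ$. Combining these with (P) yields the desired uniform bound $\mu(D)$.

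The principal obstacle is (P). Uniform boundedness of $K$-rational periodic points is the dynamical analogue of Mazur--Merel, and it is open in essentially every non-trivial case: over $\QQ$, non-existence of period-$n$ $\QQ$-rational points is known unconditionally only for $n\in\{4,5\}$ (Morton; Flynn--Poonen--Schaefer), and for $n=6$ conditionally on BSD (Stoll). Any honest proof of the conjecture must supply such a uniform result in full generality, which at present appears out of reach; the most realistic near-term progress would be conditional results (under ABC or GRH) or case-by-case verification for small $D$ and successive small $n$, combined with the pre-image machinery of this paper to package everything into a bound of the shape $\mu(D)$.
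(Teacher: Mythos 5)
This statement is the Morton--Silverman conjecture (in the special case of the family $f_c$), and the paper offers no proof of it: it is stated explicitly as a \emph{conjecture}, with a pointer to \cite{morton_silverman} and to \cite[\S3.3]{ads} for discussion. There is therefore no ``paper's proof'' to compare against, and your write-up is honestly an outline of a strategy rather than a proof; you say as much when you observe that your step (P) --- uniform boundedness of the period of $K$-rational periodic points, uniformly over $c\in K$ and over $K$ of degree $\le D$ --- is a dynamical Mazur--Merel statement that is open in essentially every case. That diagnosis is accurate, and identifying (P) as the principal obstacle is the right assessment.

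There is, however, a second gap in your reduction that you do not flag, and it is worth making explicit because it would remain even if (P) were somehow granted. In step (T) you propose, for each periodic $K$-point $a$, to invoke Theorem~\ref{Thm main intro} with $B=1$ to bound the number of $K$-rational iterated pre-images of $a$ by $\kappa(1,D,a)$, and then to sum over the finitely many $a$ in the cycle set $W$. But $W$ depends on $c$, so as $c$ ranges over $K$ the relevant values $a$ range over an infinite set; and the paper explicitly warns, right after Theorem~\ref{Thm main intro}, that the constant $\kappa(B,D,a)$ cannot be made uniform in $a$. (Nor can you escape this via heights: a periodic point $a$ for $f_c$ satisfies $\hat h(a)=0$, so $h(a)$ is bounded only up to an error that grows with $h(c)$, not absolutely.) So even conceding (P), your argument does not produce a single bound $\mu(D)$ valid for all $c$; it produces, for each $c$, a bound that depends on the particular periodic points of $f_c$. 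Closing this gap would require either a genuinely $a$-uniform pre-image bound for $a$ ranging over periodic points (which is not what Theorem~\ref{Thm main intro} gives), or a different mechanism for controlling tail lengths, such as a uniform lower bound on canonical heights of non-preperiodic points in the spirit of the Silverman--Lang conjecture discussed in Remark~\ref{rempat} --- itself open. In short: your decomposition into periodic part plus tails is the natural one, but both halves currently rest on open uniformity statements, not just the first.
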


See \cite[\S3.3]{ads} for a discussion of this conjecture.


\subsection{Notation, Pre-Image Curves, and the Proof Strategy}

Let $K$ be a field whose characteristic is not~$2$.
For $c\in K$, view $f_c(x):=x^2+c$ as a mapping $\Aff^1_K \to \Aff^1_K$.
We will study the dynamics of this mapping, by which we mean the behavior
of points under repeated application of this map.

In order to prove results valid for all $c\in K$, it is convenient to first
treat $c$ as an indeterminate.  This will be our convention unless otherwise
specified.

\begin{define}
Fix an element $a \in K$ and a positive integer $N$.  We write $\pc{N,a}$ for
the algebraic set in $\Aff^2$ defined by $f_c^N(x)-a$.  If $\pc{N,a}$ is
geometrically irreducible (that is, irreducible over $\bar{K}$), we define the
\textbf{$N\tth$ pre-image curve} $\cpc{N,a}$ to be
the completion of the normalization of $\pc{N,a}$.
\end{define}


Note that a point $(x_0,c_0)\in\Aff^2(\bar{K})$ lies on $\pc{N,a}$ if and only
if $x_0$ is a pre-image of $a$ under the $N\tth$ iterate of the map
$x \mapsto f_{c_0}(x)$.  For example, since the map
$x\mapsto f_{a-a^2}(x)$ fixes $x=a$,
the point $(a,a-a^2)$ lies on $\pc{N,a}$ for every $N\ge 1$.  Likewise, since
$f_{-a^2-a-1}$ maps
\[
a\longmapsto -a-1\longmapsto a,
\]
for every $N\ge 1$ the points $(a,-a^2-a-1)$ and $(-a-1,-a^2-a-1)$ lie on
$\pc{2N,a}$ and $\pc{2N-1,a}$, respectively.

The following result gives a sufficient condition for irreducibility of
$\pc{N,a}$.

\begin{thm}\label{thm genus intro}
Suppose $N$ is a positive integer and $a \in K$ is not a critical value of
$f_c^j(0)$ for any $2\le j\le N$.  Then $\pc{N,a}$ is geometrically
irreducible, and the genus of $\cpc{N,a}$ is $(N-3)2^{N-2} + 1$.
\end{thm}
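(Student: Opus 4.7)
The plan is to induct on $N$ using the degree-$2$ projection $\pi_N\colon\pc{N,a}\to\pc{N-1,a}$ given by $(x,c)\mapsto(x^2+c,c)=(y,c)$; on function fields this corresponds to adjoining a square root of $y-c$. The case $N=1$ is immediate, since $\pc{1,a}$ is the affine line $c=a-x^2$ of genus $0=(1-3)2^{-1}+1$. For the inductive step I would use $\pi_N$, together with an auxiliary inductive claim about behavior at infinity, to establish both irreducibility and the genus formula for $\pc{N,a}$.

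For irreducibility, since $\pi_N$ has degree $2$ and $\pc{N-1,a}$ is irreducible by induction, $\pc{N,a}$ is irreducible unless $y-c$ is a square in $\bar K(\pc{N-1,a})$; to rule this out it suffices to exhibit a simple zero. The affine zeros of $y-c$ are the points $(c_0,c_0)$ with $f_{c_0}^{N-1}(c_0)=a$, equivalently $f_{c_0}^N(0)=a$ (using $f_c(0)=c$). By hypothesis $a$ is not a critical value of $f_c^N(0)\in\ZZ[c]$, so this polynomial of degree $2^{N-1}$ has $2^{N-1}$ distinct roots. At each such $c_0$, chain-rule differentiation of $f_c^{N-1}(c)=f_c^N(0)$ in $c$ gives
\[
0\neq\frac{d}{dc}\bigl(f_c^N(0)-a\bigr)\Big|_{c_0}=\partial_y\bigl(f_c^{N-1}(y)-a\bigr)\Big|_{(c_0,c_0)}+\partial_c\bigl(f_c^{N-1}(y)-a\bigr)\Big|_{(c_0,c_0)},
\]
which shows that $(c_0,c_0)$ is a smooth point of $\pc{N-1,a}$ and that the cotangent class of $y-c$ is not a scalar multiple of that of $f_c^{N-1}(y)-a$; hence $y-c$ has a simple zero at $(c_0,c_0)$.

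For the genus I would apply Riemann--Hurwitz to the degree-$2$ cover $\cpc{N,a}\to\cpc{N-1,a}$. The affine ramification consists exactly of the $2^{N-1}$ points $(0,c_0)$ with $f_{c_0}^N(0)=a$, each simply ramified. To handle infinity I would run alongside the main induction the auxiliary claim that $\cpc{N,a}$ has exactly $2^{N-1}$ points above $c=\infty$, at each of which $c$ has a pole of order $2$ and the innermost coordinate $x$ has a pole of order $1$. Granted this at level $N-1$, at every infinity point of $\cpc{N-1,a}$ the function $y-c$ has pole order $2$ (since the pole of $c$ dominates the pole of $y$), so such points split unramified into two preimages in $\cpc{N,a}$ where $x=\sqrt{y-c}$ acquires a pole of order $1$, propagating the claim. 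Riemann--Hurwitz then reads
\[
2g_N-2=2(2g_{N-1}-2)+2^{N-1},
\]
and with $g_1=0$ this solves to $g_N=(N-3)2^{N-2}+1$. The main obstacle is the bookkeeping behind the auxiliary claim: one must verify at each level that the relations $x_{j+1}=x_j^2+c$ together with the constraint $x_N=a$ force the successive leading-term cancellations needed for $c$ to retain its double pole, for each intermediate iterate $f_c^j(x)$ with $0\le j<N$ to have a simple pole, and for no unexpected ramification to appear at infinity.
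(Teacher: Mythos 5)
Your overall strategy coincides with the paper's: induct on $N$, apply Riemann--Hurwitz to the degree-$2$ covers $\delta_N\colon\cpc{N,a}\to\cpc{N-1,a}$, locate the $2^{N-1}$ ramification points at $x=0$ (corresponding to the simple roots of $f_c^N(0)-a$), and check that nothing ramifies at infinity. Your irreducibility argument, though, takes a slightly different route. The paper first establishes (Proposition~\ref{smooth}) that the critical-value condition is equivalent to nonsingularity of $\pc{N,a}$, and then deduces irreducibility (Proposition~\ref{irred}) by observing that every irreducible component must contain the unique point $(0,c_0)$ of $\pc{N,a}$ lying over $(c_0,c_0)\in\pc{N-1,a}$, which is incompatible with disjointness of components on a nonsingular set. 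You argue instead that $y-c$ has a simple zero at $(c_0,c_0)$ on $\pc{N-1,a}$ and therefore cannot be a square in $\bar K\bigl(\pc{N-1,a}\bigr)$; the non-vanishing of $\frac{d}{dc}\bigl(f_c^N(0)\bigr)$ at $c_0$ both shows smoothness of $(c_0,c_0)$ and prevents $dy-dc$ from being proportional to the conormal direction there. Both arguments are correct; yours trades the paper's geometric disjointness argument for a direct function-field criterion, and has the mild advantage of not invoking global smoothness of $\pc{N,a}$ itself. The one place your write-up is genuinely thinner than the paper's is the behavior at infinity: you state the needed auxiliary claim (two unramified points at $c=\infty$ over each infinite place, with $\operatorname{ord}(x)=-1$, $\operatorname{ord}(c)=-2$) but defer the verification. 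The paper proves this cleanly by showing that $(x_{M+1}/x_M)^2=(x_M-c)/x_M^2$ has a nonzero finite value at each infinite place and invoking Kummer's theorem; you should carry out an equivalent computation, since without it the Riemann--Hurwitz count is incomplete.
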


We now restate the main part of Theorem~\ref{Thm cool intro}:

\begin{cor} \label{Thm cool intro 2}
Let $K$ be a number field and fix $N\ge 4$ and $a\in K$ that is not a
critical value of $f_c^j(0)$ for any $2\le j\le N$.  Then only finitely many
$P\in\cpc{N,a}(\bar K)$ satisfy $[K(P)\col K]<2^{N-3}$, but there is a finite
extension $L$ of $K$ for which infinitely many $P\in\cpc{N,a}(\bar K)$
satisfy $[L(P)\col L]=2^{N-3}$.
\end{cor}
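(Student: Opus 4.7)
Both halves of the corollary pivot on the natural $K$-morphism $\pi\col\cpc{N,a}\to\cpc{3,a}$ that sends $(x,c)\mapsto (f_c^{N-3}(x),c)$. Since $f_c^{N-3}(x)$ has generic degree $2^{N-3}$ in~$x$, $\pi$ has degree $2^{N-3}$. By Theorem~\ref{thm genus intro} applied at level~$3$ (the hypothesis on~$a$ is inherited), $\cpc{3,a}$ is a geometrically irreducible curve of genus~$1$, while $\cpc{N,a}$ has genus $(N-3)2^{N-2}+1\ge 5$.

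For the existence half, I would first pick a finite extension $L_0/K$ over which $\cpc{3,a}$ acquires a rational point, making it an elliptic curve $E/L_0$; then enlarge to $L\supset L_0$ with $E(L)$ of positive Mordell--Weil rank (always possible, e.g.\ by adjoining roots of a suitable quadratic twist). Hilbert irreducibility for rational points on an abelian variety (N\'eron), applied to the degree-$2^{N-3}$ cover $\pi$, guarantees that the set of $Q\in E(L)$ for which $\pi^{-1}(Q)$ is \emph{not} $L$-irreducible is a thin set, hence proper in the infinite group $E(L)$. The remaining infinitely many $Q$ each lift to a single closed point $P\in\cpc{N,a}$ with $[L(P)\col L]=2^{N-3}$, as required.

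For the finiteness half, set $d=2^{N-3}-1$ and argue by contradiction. Each $P$ with $[K(P)\col K]=e\le d$ yields, via its $K$-Galois orbit, a $K$-rational point of the symmetric power $\cpc{N,a}^{(e)}$, so by pigeonhole some fixed $e\le d$ supports infinitely many $K$-rational points. Choosing an Abel--Jacobi map (over a harmless finite base extension if no $K$-rational divisor is available) embeds $\cpc{N,a}^{(e)}$ as $W_e\subset J:=\operatorname{Jac}(\cpc{N,a})$, and Faltings' theorem on subvarieties of abelian varieties forces $W_e$ to contain a positive-dimensional translate of an abelian subvariety of~$J$. By the standard dictionary between such translates and special maps, this would produce a $\bar{K}$-morphism $\cpc{N,a}\to Y$ of degree $\le d<2^{N-3}$ with $Y$ either $\PP^1$ or an elliptic curve.

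\textbf{Main obstacle.} The crux is ruling out both possibilities for~$Y$. The case $Y=\PP^1$ I would attack via the Castelnuovo--Severi inequality applied to the pair $(\pi,h)$, where $h$ is the putative low-degree map to~$\PP^1$ and one exploits $g(\cpc{3,a})=1$; this handles small~$N$ cleanly but for larger~$N$ must be supplemented by combining with the intermediate maps $\cpc{N,a}\to\cpc{j,a}$ (of degree $2^{N-j}$, whose genera are known from Theorem~\ref{thm genus intro}) throughout the tower. The elliptic case $Y=E'$ seems to require an isogeny-decomposition analysis of~$J$: one must show that the only elliptic quotient to which $\cpc{N,a}$ admits a morphism of degree strictly below $2^{N-3}$ is (an isogeny factor of) $\cpc{3,a}$ itself, and then observe that any such morphism factors through $\pi$ and therefore has degree at least $2^{N-3}$. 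This structural analysis of the Jacobian of the pre-image tower is the technical heart of the argument, and is where I expect the bulk of the work to concentrate.
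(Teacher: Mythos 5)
Your existence argument is sound but more elaborate than necessary: the paper simply observes that the degree--$2^{N-3}$ map $\psi=\delta_4\circ\cdots\circ\delta_N$ pulls back the infinitely many $L$-points of $\cpc{3,a}$ (genus $\le 1$) to points of degree $\le 2^{N-3}$ over $L$, and then the finiteness half (applied over $L$) upgrades $\le$ to $=$; no appeal to Hilbert irreducibility on abelian varieties is needed.

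The finiteness half, however, has a genuine gap, and you have identified the wrong ``main obstacle.'' The real problem is the sentence ``By the standard dictionary between such translates and special maps, this would produce a $\bar K$-morphism $\cpc{N,a}\to Y$ of degree $\le d$ with $Y$ either $\PP^1$ or an elliptic curve.'' That is not a standard dictionary; it is precisely the hard implication of the Abramovich--Harris conjecture, which the paper explicitly points out is \emph{false} in general (Debarre and Fahlaoui, \cite[5.17]{debarrefahlaoui}). It is known only for small $d$, while here $d=2^{N-3}-1$ grows without bound. So the symmetric-power/Mordell--Lang route cannot be made to work as stated, and even the supplementary Castelnuovo analysis you sketch (which the paper does carry out, proving the gonality is $2^{N-2}$ and the minimal genus-one degree is $2^{N-3}$) becomes moot, since the low-degree map whose existence it would contradict was never legitimately produced.

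The paper avoids this entirely by using Vojta's inequality on arithmetic discriminants, in the Song--Tucker form (Theorem~\ref{stthm}): for a map $\map\colon C\to C'$, only finitely many $P$ satisfy $[K(P)\col K]<\rho(\map)$ \emph{and} $K(\map(P))=K(P)$, where $\rho(\map)=\deg R_\map/(2\deg\map)$. The crucial further idea, which your proposal lacks, is Lemma~\ref{stlemma}: one must feed the \emph{entire tower} of degree-$2$ covers $\cpc{N,a}\to\cdots\to\cpc{1,a}$ into Vojta's theorem, using $\rho(\delta_M)=2^{M-3}$ at each level, to obtain the bound $B_N=\min_M 2^{N-M}\rho(\delta_M)=2^{N-3}$. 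Applying Vojta to the single composite $\cpc{N,a}\to\cpc{1,a}\cong\PP^1$ gives only $\rho=(N-1)/2$, exponentially short of $2^{N-3}$; the tower argument is what rescues the exponential bound.
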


This result should be compared with a conjecture of Abramovich and Harris
\cite[p.~229]{abramovichharris}, which says that a curve $C$ over a number
field $K$ admits a rational map of degree at most $d$ to a curve
of genus $0$ or $1$ if and only if there is a finite extension $L$ of
$K$ for which infinitely many $P\in C(\bar \QQ)$ satisfy
$[L(P)\col L]\le d$.  In light of the above result, this conjecture
says that $2^{N-3}$ should be the minimal degree of any rational map from
$\cpc{N,a}$ to a curve of genus $0$ or $1$.  We will prove that this is in
fact the case (one minimal degree map is the composition
$\delta_4\circ\delta_5\circ\dots\circ\delta_N$, whose image is the
genus $1$ curve $\cpc{3,a}$, where the maps $\delta_M$ are defined below).
It should be noted, however, that Debarre and Fahlaoui have produced
counterexamples to the Abramovich--Harris conjecture
\cite[5.17]{debarrefahlaoui}.  Still, the conjecture is known to be true
when $d$ is small (due to Abramovich, Harris, Hindry, Silverman, and Vojta),
%
%
and it is important to understand when it holds.

Define a degree-$2$ morphism $\delta\colon \Aff^2\to\Aff^2$ by
$\delta(x,c)=(x^2+c,c)$.  For $N > 1$, let $\delta_N$ be the restriction of
$\delta$ to $\pc{N,a}$, so the image of $\delta_N$ is $\pc{N-1,a}$.
For any fixed $a\in K$, this gives a tower of algebraic sets and maps
\[
\cdots \stackrel{\delta_{N+1}}{\longrightarrow} \pc{N,a}
\stackrel{\delta_N}{\longrightarrow} \pc{N-1,a}
\stackrel{\delta_{N-1}}{\longrightarrow} \cdots
\stackrel{\delta_2}{\longrightarrow}\pc{1,a}.
\]
When $\pc{N,a}$ and $\pc{N-1,a}$ are geometrically irreducible, $\delta_N$
induces a degree-$2$ morphism $\delta_N\colon \cpc{N,a} \to \cpc{N-1,a}$.

Our strategy for proving Theorem~\ref{Thm main intro} in case $B=D=1$
is as follows: if $a\in\QQ$ is not a critical value of $f_c^j(0)$ for any
$j\in\{2,3,4\}$, then Theorem~\ref{thm genus intro} implies that
$\cpc{4,a}$ is a geometrically irreducible curve of genus $5$.
By the Mordell conjecture (Faltings' theorem \cite{faltings}),
$\cpc{4,a}(\QQ)$ is finite.  An argument involving heights shows
that any point in $\Aff^2(\QQ)$ has a total of finitely many pre-images in
$\Aff^2(\QQ)$ under the various iterates of $\delta$.
Thus the union of all $\pc{N,a}(\QQ)$ with $N\ge 4$ is finite.  To deduce
Theorem~\ref{Thm main intro} in case $B=D=1$, note that for each $N<4$
the number of points in $\pc{N,a}(\bar \QQ)$ having fixed values of $a$ and $c$
is at most $2^N$, and in particular is bounded independently of $c$.
The proof of Theorem~\ref{Thm main intro} for other values of $B$ and $D$
follows the same strategy, but instead of Faltings' theorem we use a
consequence of Vojta's inequality on arithmetic discriminants \cite{vojta};
this requires some additional arguments adapting Vojta's result to our
situation.

We remark that the algebraic sets $\pc{N,0}$ have arisen previously in the
context of the $p$-adic Mandelbrot set \cite{rafe}.  Also the sets
$\pc{2,a}$ occur implicitly in the study of uniform lower bounds on
canonical heights of morphisms \cite{ing}; we will discuss the connection
between such bounds and our results in Remark~\ref{rempat}.

The remainder of the paper is organized as follows.  In
\S\ref{Sec: Smooth and Irreducible} we give a criterion for nonsingularity of
$\pc{N,a}$ and prove that nonsingularity implies irreducibility.
In \S\ref{sec:genus}, in case $\pc{N,a}$ is nonsingular, we compute the genus
of $\cpc{N,a}$, as well as the minimal degree of any rational map from
$\cpc{N,a}$ to a curve of genus $0$ or $1$.  We then prove our arithmetic
results in \S\ref{Sec: Uniformity}.


\section{Smoothness and Irreducibility}
\label{Sec: Smooth and Irreducible}

In this section we determine when $\pc{N,a}$ is nonsingular, and we
show that $\pc{N,a}$ is irreducible whenever it is nonsingular.
Throughout this section, $K$ is an algebraically closed field whose
characteristic is not $2$.

\begin{prop} \label{smooth}
Fix a positive integer $N$.  For $a \in K$, the following assertions are
equivalent:
\begin{enumerate}[\textup(a\textup)]
\item $\pc{N,a}$ is nonsingular.
\item $\pc{M,a}$ is nonsingular for $1 \leq M \leq N$.
\item There do not exist an integer $j$ with $2 \leq j \leq N$ and an element
$c_0 \in K$ such that
\[
f_{c_0}^j(0) = a \quad\text{ and }\quad
\frac{\partial f_c^j(0)}{\partial c}\Big|_{c=c_0} = 0.
\]
\end{enumerate}
\end{prop}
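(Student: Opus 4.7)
The plan is to reduce smoothness to the vanishing of the two partial derivatives of $F_N(x,c):=f_c^N(x)-a$ at a point $(x_0,c_0)\in\pc{N,a}$. The heart of the argument is the equivalence (a)$\Leftrightarrow$(c); the implication (b)$\Rightarrow$(a) is trivial (take $M=N$), and (a)$\Rightarrow$(b) will follow at the end because condition (c) for $N$ formally contains the same condition for every $M\le N$.

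For the $x$-partial, a short induction from $f_c^N(x)=(f_c^{N-1}(x))^2+c$ gives
\[
\frac{\partial f_c^N(x)}{\partial x}\;=\;2^N\prod_{i=0}^{N-1}f_c^i(x).
\]
Since $\mathrm{char}\,K\ne 2$, this vanishes at $(x_0,c_0)$ exactly when $f_{c_0}^i(x_0)=0$ for some $0\le i\le N-1$. In that case $a=f_{c_0}^N(x_0)=f_{c_0}^{N-i}(0)$, so setting $j:=N-i$ one has $1\le j\le N$ and $f_{c_0}^j(0)=a$.

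For the $c$-partial, I would write $f_c^N(x_0)=\phi(g(c),c)$ with $g(c):=f_c^i(x_0)$ and $\phi(y,c):=f_c^{j}(y)$, and apply the chain rule, being careful that the inner map $g$ depends on $c$. At $c=c_0$ one has $g(c_0)=0$, and the key observation is that
\[
\frac{\partial \phi}{\partial y}(0,c_0)\;=\;2^{j}\prod_{\ell=0}^{j-1}f_{c_0}^\ell(0)\;=\;0,
\]
because the product contains the factor $f_{c_0}^0(0)=0$. Thus the inner contribution drops out and one is left with $\partial f_c^N(x_0)/\partial c|_{c_0}=\partial f_c^{j}(0)/\partial c|_{c_0}$. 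Combining both partials shows that $(x_0,c_0)$ is singular iff some $1\le j\le N$ satisfies $f_{c_0}^j(0)=a$ and $\partial f_c^j(0)/\partial c|_{c_0}=0$; but $j=1$ never occurs because $f_c(0)=c$ has $c$-derivative identically $1$, so we may restrict to $2\le j\le N$.

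For the converse inside (a)$\Leftrightarrow$(c), given $(j,c_0)$ satisfying (c), I would produce an honest singular point by choosing any root $x_0\in K$ of the polynomial $f_{c_0}^{N-j}(x)$ (available because $K$ is algebraically closed) and verifying via the formulas above that $(x_0,c_0)\in\pc{N,a}$ is singular. Finally, (a)$\Rightarrow$(b) is immediate from (a)$\Leftrightarrow$(c), because condition (c) at level $N$ quantifies over a superset of the $j$'s allowed at any level $M\le N$. The main obstacle is the chain-rule bookkeeping for the second partial: one must resist treating $f_c^i(x_0)$ as constant in $c$, and then recognize that the extra term dies precisely because $y=0$ is the unique critical point of $f_c$.
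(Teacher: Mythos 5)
Your proof is correct and follows essentially the same route as the paper's: both reduce to $(a)\Leftrightarrow(c)$, use the product formula $\partial f_c^N/\partial x = 2^N\prod_{i<N} f_c^i(x)$ to convert vanishing of the $x$-partial into $f_{c_0}^i(x_0)=0$ for some $i\le N-1$, then apply the chain rule to the $c$-partial and observe the inner term drops out, and finally rule out $j=1$ since $\partial f_c(0)/\partial c \equiv 1$. The one cosmetic difference is your reason that $\partial\phi/\partial y$ vanishes at $y=0$: you invoke the product formula again and point to the factor $f_{c_0}^0(0)=0$, while the paper notes that $f_{c_0}^{N-i}(y)\in K[y^2]$ so its $y$-derivative has zero constant term; these are the same fact phrased differently, and both are fine.
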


\begin{remark}
Condition $(c)$ says that $a$ is not a critical value of $f_c^j(0)$ for
any $2\le j\le N$.
\end{remark}

\begin{proof}
It suffices to show that $(a)$ and $(c)$ are equivalent, since if $(c)$ holds
for some $N$ then it automatically holds for every smaller $N$.  In order to
prove equivalence of $(a)$ and $(c)$, we must describe the singular points on
$\pc{N,a}$.  A point $(x_0,c_0)\in \Aff^2(K)$ is a singular point on
$\pc{N,a}$ if and only if the following three equations are satisfied:
\begin{align}
f_{c_0}^N(x_0) & = a\label{x0precrit}\\
\frac{\partial f_{c_0}^N(x)}{\partial x}\Big|_{x=x_0} &=0\label{superat}\\
\frac{\partial f_{c}^N(x_0)}{\partial c}\Big|_{c=c_0} &=0.\label{centers}
\end{align}
By repeatedly applying the chain rule (and using that $f'_{c_0}(x) = 2x$), we
find
\begin{align*}
\frac{\partial f_{c_0}^N(x)}{\partial x}\Big|_{x=x_0} &=
f'_{c_0}\left( f^{N-1}_{c_0}(x_0) \right) \cdot
f'_{c_0}\left( f^{N-2}_{c_0}(x_0) \right) \cdot\cdots\cdot
f'_{c_0}\left( f_{c_0}(x_0) \right) \cdot f'_{c_0}\left(x_0 \right) \\
&= 2^N\prod_{i=0}^{N-1}f_{c_0}^i(x_0).
\end{align*}
Thus, equation~\eqref{superat} is equivalent to the existence of an integer $i$
with $0 \leq i \leq N-1$ such that $f_{c_0}^i(x_0) = 0$.  For any such $i$, we
have
\begin{align*}
\frac{\partial f_c^N(x_0)}{\partial c}\Big|_{c=c_0}
&= \frac{\partial \left(f_{c}^{N-i}\left(f_c^i(x_0)\right)\right)}
{\partial c}\Big|_{c=c_0} \\
&= \frac{\partial f_{c_0}^{N-i}(y)}{\partial y} \Big|_{y=0}\cdot
   \frac{\partial f_c^i(x_0)}{\partial c}\Big|_{c=c_0} +
   \frac{\partial f_c^{N-i}(0)}{\partial c}\Big|_{c=c_0}.
\end{align*}
Since $f_{c_0}^{N-i}(y)=f_{c_0}^{N-i-1}(y^2+c_0)$ is a polynomial in
$K[y^2]$, its partial derivative with respect to $y$ has zero constant
term, so
\[
\frac{\partial f_c^N(x_0)}{\partial c}\Big|_{c=c_0} =
\frac{\partial f_c^{N-i}(0)}{\partial c}\Big|_{c=c_0}.
\]
If $i=N-1$ then this common value is $\frac{\partial f_c(0)}{\partial c} = 1$,
which in particular is nonzero.  Thus, a point $(x_0,c_0)\in \Aff^2(K)$
is a singular point of $\pc{N,a}$ if and only if all three of the following are
satisfied:
\begin{align}
f_{c_0}^N(x_0) & = a\label{x0precrit_v2}\\
f_{c_0}^i(x_0) &= 0 \,\text{ for some $i$ satisfying $0 \leq i \leq N-2$}
\label{superat_v2}\\
\frac{\partial f_{c}^{N-i}(0)}{\partial c}\Big|_{c=c_0} &=0\label{centers_v2}.
\end{align}
When \eqref{superat_v2} holds, equation \eqref{x0precrit_v2} is equivalent to
\begin{equation}
\label{x0precrit_v3}
f_{c_0}^{N-i}(0) = a.
\end{equation}
Conversely, if $c_0$ and $i$ satisfy \eqref{centers_v2} and
\eqref{x0precrit_v3}, then there exists $x_0\in K$ satisfying
\eqref{superat_v2}.  This implies the equivalence of $(a)$ and $(c)$ (with
$j=N-i$).
\end{proof}

\begin{remark} \label{Rem: Singular count}
Assertion $(c)$ of Proposition~\ref{smooth} gives a criterion for checking
whether $\pc{N,a}$ is smooth.  In fact, it allows us to bound the number of
values $a \in K$ for which smoothness fails.  Namely, $(c)$ associates to
any such value $a\in K$ a pair $(j,c_0)$, where $2\le j\le N$ and $c_0$
is a root of $\frac{\partial f_c^j(0)}{\partial c}$.  Since this last
polynomial has degree $2^{j-1} - 1$, there are at most that many possibilities
for $c_0$ corresponding to a specified value $j$.  Summing over $2\le j\le N$,
we find that $\pc{N,a}$ is smooth for all but at most $2^N-N-1$ values
$a\in K$.  We checked that equality holds if $K$ has characteristic zero
and $N\le 6$, and we suspect equality holds in most situations.
For $2\le N\le 6$, there are precisely $2^{N-1}-1$ values $a\in\bar{\QQ}$ for
which $\pc{N,a}$ is singular but $\pc{N-1,a}$ is nonsingular, and in each case
these values $a$ are conjugate over $\QQ$.
\end{remark}

\begin{cor} \label{Cor: N=1}
The algebraic set $\pc{1,a}$ is nonsingular for any $a\in K$.
The algebraic set $\pc{2,a}$ is nonsingular for any
$a\in K\setminus\{-1/4\}$.
\end{cor}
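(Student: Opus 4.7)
The plan is to apply Proposition~\ref{smooth} directly, using the equivalence of conditions $(a)$ and $(c)$.

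For the first assertion, I would observe that the condition $(c)$ is vacuous when $N=1$: there are no integers $j$ satisfying $2\le j\le 1$. Therefore $(c)$ is automatically satisfied for every $a\in K$, and Proposition~\ref{smooth} gives the nonsingularity of $\pc{1,a}$.

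For the second assertion, I would explicitly compute the single obstruction allowed by condition $(c)$ when $N=2$, namely the case $j=2$. Here $f_c^2(0) = f_c(c) = c^2 + c$, so
\[
\frac{\partial f_c^2(0)}{\partial c} = 2c+1,
\]
which vanishes precisely at $c_0 = -1/2$. Plugging this back in yields $f_{c_0}^2(0) = 1/4 - 1/2 = -1/4$. Thus the only value of $a$ for which condition $(c)$ fails is $a=-1/4$, and by Proposition~\ref{smooth} the algebraic set $\pc{2,a}$ is nonsingular for all other $a\in K$.

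There is no real obstacle here: the result is a direct application of the characterization of singular points already established, combined with the observation that $f_c^2(0)$ is an explicit quadratic polynomial in $c$ whose unique critical value is easy to compute.
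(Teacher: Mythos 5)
Your proof is correct and is precisely the direct application of Proposition~\ref{smooth} that the paper intends (the paper states the corollary without a written proof). The computation of the unique critical value $-1/4$ of $f_c^2(0)=c^2+c$ is exactly right.
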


\begin{prop} \label{irred}
For $a\in K$ and $N\ge 1$, if $\pc{N,a}$ is nonsingular then it is
irreducible.
\end{prop}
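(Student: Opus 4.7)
The plan is to argue by induction on $N$. The base case $N=1$ is immediate: $\pc{1,a}$ is cut out by $x^2+c-a$, which is linear (hence irreducible) in $c$ over $K[x]$. For the inductive step, I will use the equivalence of $(a)$ and $(b)$ in Proposition~\ref{smooth} to deduce that nonsingularity of $\pc{N,a}$ forces nonsingularity of $\pc{N-1,a}$, and then invoke the inductive hypothesis to conclude that $\pc{N-1,a}$ is irreducible.

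Next I would exploit the restriction $\delta_N\colon\pc{N,a}\to\pc{N-1,a}$ of the degree-$2$ morphism $\delta(x,c)=(x^2+c,c)$. The fiber of $\delta_N$ over a point $(y_0,c_0)\in\pc{N-1,a}$ consists of those $(x,c_0)$ with $x^2=y_0-c_0$, and it collapses to the single point $(0,c_0)$ (with multiplicity two) exactly when $y_0=c_0$. Because $\pc{N,a}$ is smooth, it decomposes as a disjoint union of its irreducible components, each a smooth curve (since $\pc{N,a}$ is a hypersurface in $\Aff^2$, every component has dimension $1$). No such component can contract to a point under $\delta_N$, so each surjects onto the irreducible curve $\pc{N-1,a}$. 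Summing degrees shows that if $\pc{N,a}$ is reducible, then it has exactly two components, each mapping birationally onto $\pc{N-1,a}$.

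To rule out this two-component scenario I would produce a ramification point of $\delta_N$. In the hypothetical splitting, since $\pc{N-1,a}$ is smooth (hence normal), Zariski's main theorem forces each component to be isomorphic to $\pc{N-1,a}$ via $\delta_N$; equivalently, $\delta_N$ is a trivial degree-$2$ cover in which every fiber consists of two distinct points. On the other hand, $\delta_N$ is ramified over any point $(c_0,c_0)\in\pc{N-1,a}$, and such a point exists as soon as $c_0$ is a root of $f_c^N(0)-a\in K[c]$, since $f_{c_0}(0)=c_0$ converts the condition $f_{c_0}^{N-1}(c_0)=a$ into $f_{c_0}^N(0)=a$. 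As $K$ is algebraically closed and this polynomial has positive degree $2^{N-1}$, such a root exists, yielding the desired contradiction. The main subtlety is invoking Zariski's main theorem to promote the hypothetical two-component decomposition into a trivial cover; once that is in place, the ramified fiber just produced finishes the argument.
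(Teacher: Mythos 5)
Your proof is correct and shares the same skeleton as the paper's: induction on $N$, reduction to nonsingularity (hence irreducibility) of the lower levels via Proposition~\ref{smooth}, and the use of a point $(c_0,c_0)\in\pc{N-1,a}$ where the fiber of $\delta_N$ degenerates to the single point $(0,c_0)$. The difference lies in how you conclude. The paper argues via the function field: the components of $\pc{M,a}$ correspond to the monic factors of $x^2+c-y$ over $K(y,c)$, whose coefficients lie in $K[y,c]$ by normality, so every component has a point above $(c_0,c_0)$; since the only such point is $(0,c_0)$, the components would all meet there, contradicting nonsingularity. You instead count: if $\pc{N,a}$ were reducible, the degree-$2$ finite flat map $\delta_N$ would split it into exactly two components, each finite and birational onto the normal curve $\pc{N-1,a}$, hence isomorphisms by Zariski's main theorem; since smoothness forces the components to be disjoint, $\delta_N$ would be a trivial (unramified) double cover, contradicting the ramified fiber over $(c_0,c_0)$. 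This is a valid and slightly more ``geometric'' route — indeed the paper's closing remark sketches exactly this style of argument (finite morphisms do not contract components) — at the cost of invoking Zariski's main theorem and a degree-sum over the generic fiber, where the paper's function-field argument is more elementary. One small point worth tightening: the claim that a reducible $\pc{N,a}$ has exactly two components each birational onto $\pc{N-1,a}$ relies on $\delta_N$ being finite flat of degree $2$ (equivalently, $1,x$ freely generate the extension) and on the generic fiber being reduced (char $\ne 2$); you should state this explicitly rather than gesture at ``summing degrees.''
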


\begin{proof}
First note that $\pc{1,a}$ is irreducible for any $a \in K$, since the
defining polynomial $x^2 + c - a \in K[x,c]$ is linear in $c$.
Henceforth we assume $N > 1$.  If $\pc{N, a}$ is nonsingular,
then Proposition~\ref{smooth} implies $\pc{M,a}$ is also nonsingular for all
$M < N$.  We will show that, for $M-1 < N$, if $\pc{M-1,a}$ is irreducible,
then $\pc{M,a}$ is irreducible as well.  By induction, this implies $\pc{N, a}$
is irreducible.

Write the function field of $\pc{M-1,a}$ as $K(y,c)$,
where $f_c^{M-1}(y)=a$.  The function fields of the components of $\pc{M,a}$
are the extensions of $K(y,c)$ defined by the factors of $x^2+c-y$ in
$K(y,c)[x]$.  Since each such factor is monic in $x$, and has
coefficients in $K[y,c]$, the corresponding component contains a point
$(x_0,c_0)$ lying over any prescribed point $(y_0,c_0)$ of $\pc{M-1,a}$.
Choose $c_0 \in K$ satisfying $f_{c_0}^{M-1}(c_0) = a$, so
$(c_0, c_0)$ is a point of $\pc{M-1,a}$.  Then $(0,c_0)$ is the unique point
$P\in\pc{M,a}$ for which $\delta_M(P)=(c_0,c_0)$.  Thus $(0,c_0)$ is contained
in each component of $\pc{M,a}$, so since $\pc{M,a}$ is nonsingular it
must be irreducible.
\end{proof}

One can also prove this result geometrically: for the key step,
note that $\delta_M$ is a finite morphism, so if $\pc{M-1,a}$ is
irreducible then $\delta_M$ maps each component of $\pc{M,a}$ surjectively
onto $\pc{M-1,a}$.

\begin{remark}
In fact, $\pc{N,a}$ is typically irreducible even when it is singular.
For each $N\ge 1$, the previous two results imply irreducibility of $\pc{N,a}$
for all values $a \in K$ not on a short list of potential exceptions.
For $N\le 4$, we checked the values $a$ on these lists, and found that
$\pc{N,a}$ is irreducible for all $a \in K$ except $a=-1/4$.  On the
other hand, $\pc{N,-1/4}$ has two components for each $N$ with $2\le N\le 6$.
We suspect that larger values $N$ behave the same way.
\end{remark}


\section{Genus and gonality}
\label{sec:genus}

In this section, for all values of $N$ and $a$ for which $\pc{N,a}$ is
nonsingular, we compute the genus and gonality of $\cpc{N,a}$.  Recall that the
\textbf{gonality} is the minimum degree of a non-constant morphism
$\cpc{N,a}\to\PP^1$.  We also compute the minimum degree of a non-constant
morphism from $\cpc{N,a}$ to a curve of genus one.

Throughout this section, $K$ is an algebraically closed field whose
characteristic is not~$2$.

For a fixed value $a\in K$, we will compute the genus of $\cpc{N,a}$
inductively, by applying the Riemann-Hurwitz formula to the map
$\delta_N\colon \cpc{N,a} \to \cpc{N-1,a}$ defined in
Section~\ref{Introduction}.  We begin by computing the ramification of this
map.

\begin{lem} \label{ramification}
Pick $a\in K$ and $N\ge 2$ for which $\pc{N,a}$ is
nonsingular.  Then $f_c^N(0)=a$ for precisely $2^{N-1}$ values $c\in K$,
and the corresponding points $(0,c)\in \pc{N,a}(K)$ comprise all points
of $\cpc{N,a}(K)$ at which $\delta_N\colon \cpc{N,a}\to\cpc{N-1,a}$
ramifies.
\end{lem}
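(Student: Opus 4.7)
The plan is to handle the affine and infinite parts of $\cpc{N,a}$ separately. The count is straightforward: an induction using $f_c^{N+1}(0)=(f_c^N(0))^2+c$ shows $f_c^N(0)\in K[c]$ has degree $2^{N-1}$, and nonsingularity of $\pc{N,a}$ combined with condition~$(c)$ of Proposition~\ref{smooth} (applied with $j=N$) implies that $f_c^N(0)-a$ and $\partial_c f_c^N(0)$ share no root. Hence $f_c^N(0)-a$ has exactly $2^{N-1}$ distinct roots in $K$.

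For the ramification on the affine part, I would inspect the fibers of $\delta_N$ directly. For any $(x_0,c_0)\in\pc{N,a}$, the fiber of $\delta_N$ over $\delta_N(x_0,c_0)=(x_0^2+c_0,c_0)$ consists of points $(x,c_0)\in\pc{N,a}$ with $x^2=x_0^2$; both $(\pm x_0,c_0)$ lie in $\pc{N,a}$ since $f_{c_0}(-x_0)=f_{c_0}(x_0)$. So the fiber has two distinct points when $x_0\ne 0$ (so $\delta_N$ is unramified there) and a single point when $x_0=0$ (forcing ramification index $2$). Combined with the count above, this shows the affine ramification points are precisely the $2^{N-1}$ points $(0,c_0)$ with $f_{c_0}^N(0)=a$.

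The main difficulty is ruling out ramification at infinity. I would realize $\delta_N$ as the double cover of $\cpc{N-1,a}$ defined by $x^2=u$, where $u:=y-c$ and $y$ denotes the $x$-coordinate on $\cpc{N-1,a}$. For such a cover, the branch locus on $\cpc{N-1,a}$ is exactly $\{P : \operatorname{ord}_P(u)\text{ is odd}\}$. The affine zeros of $u$ are the points $(c_0,c_0)\in\pc{N-1,a}$ with $f_{c_0}^N(0)=a$, and each is a simple zero: writing the tangent line to $\pc{N-1,a}$ at $(c_0,c_0)$ as $A\,dy+B\,dc=0$, one checks that $du=dy-dc$ is nonzero on this line iff $A+B\ne 0$, and $A+B=\partial_c f_c^N(0)|_{c_0}$ is nonzero by smoothness. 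Each such branch point $(c_0,c_0)$ has $(0,c_0)$ as its unique preimage in $\cpc{N,a}$.

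It remains to show $u$ has only even-order poles at infinity. I would prove by induction on $M$ (for $1\le M\le N-1$) that at every point at infinity of $\cpc{M,a}$, the function $c$ has pole of order exactly $2$ and the $x$-coordinate has pole of order exactly $1$. The base case $M=1$ is immediate since $\cpc{1,a}\cong\PP^1$ is parametrized by $x$ with $c=a-x^2$. For the inductive step, the hypothesis implies $y-c$ has a pole of order $2$ (dominated by $c$) at each point at infinity of $\cpc{M-1,a}$, which is even, so $\delta_M$ is unramified over such points; both preimages inherit the local parameter, at which $c$ again has pole of order $2$ and $x=\sqrt{y-c}$ has pole of order $1$. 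Applied with $M=N-1$, the poles of $u$ on $\cpc{N-1,a}$ are all of order $2$, so $\delta_N$ is unramified at infinity, completing the proof.
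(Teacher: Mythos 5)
Your proof is correct and follows essentially the same approach as the paper: the count and the identification of the affine ramification points is the same, and your inductive pole-order bookkeeping at infinity is identical to the paper's, which invokes Kummer's theorem where you use the (equivalent) odd-order-of-$u$ branch criterion for $x^2=u$. The tangent-line verification that $A+B=\partial_c f_c^N(0)|_{c_0}\ne 0$ is a nice sanity check but is redundant, since the fiber count already forces ramification index exactly $2$ at each $(0,c_0)$.
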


\begin{proof}
Since $\pc{N,a}$ is nonsingular, for each $1\le M\le N$ it follows that
$\pc{M,a}$ is nonsingular (by Proposition~\ref{smooth}) and hence
irreducible (by Proposition~\ref{irred}).

First consider $\delta_N$ on $\pc{N,a}$, which is defined by
$\delta_N(x,c) = (x^2+c, c)$.  The points with fewer than two
pre-images are the images of points with $x=0$, so $\delta_N$ ramifies at
precisely the points $(0,c)$ on $\pc{N,a}$.  For $c\in K$, the point
$(0,c)\in \Aff^2(K)$ lies on $\pc{N,a}$ if and only if $f_c^N(0)=a$.
Note that $f_c^N(0)-a$ is a polynomial in $K[c]$ of degree $2^{N-1}$.
If $c_0\in K$ is a repeated root of $f_c^N(0)-a$, then
\[
f_{c_0}^N(0)=a\quad\text{ and }\quad
\frac{\partial f_c^N(0)}{\partial c}\Big|_{c=c_0} = 0,
\]
contradicting our nonsingularity hypothesis (by Proposition~\ref{smooth}).
Thus $f_c^N(0)=a$ for precisely $2^{N-1}$ values $a\in K$, and
the corresponding points $(0,c)\in\pc{N,a}(K)$ comprise all points of
$\pc{N,a}(K)$ at which $\delta_N$ ramifies.

It remains to show that $\delta_N$ is unramified at the `cusps'
$\cpc{N,a}\setminus\pc{N,a}$.  Write the function field of $\cpc{M,a}$
as $K(x_M,c)$ where $x_M^2+c=x_{M-1}$ for $M>1$ and $x_1^2+c=a$.
At the infinite place $P_1$ of $K(x_1,c)$, the functions $x_1$ and $c$ have
poles of orders $1$ and $2$.
Inductively, assume $x_M$ and $c$ have poles of orders $1$ and $2$ at
a place $P$ of $K(x_M,c)$ which lies over $P_1$.
Then $y:=x_{M+1}/x_M$ satisfies
$y^2=(x_M-c)/x_M^2$, and since the right side has a nonzero finite
value at $P$, there are two possibilities for the value of $y$ at $P$.  Thus,
Kummer's theorem \cite[Thm.~III.3.7]{Stichtenoth} implies that $P$ lies
under two places of $K(x_{M+1},c)$, neither of which is ramified.
\end{proof}

\begin{thm}[Genus Formula]\label{genus}
Let $a\in K$, and let $N\ge 1$ be an integer for which $\pc{N,a}$ is
nonsingular.  Then $\cpc{N,a}$ is irreducible and has genus $(N-3)2^{N-2} + 1$.
\end{thm}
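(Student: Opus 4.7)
The plan is to prove the genus formula by induction on $N$, using the Riemann--Hurwitz formula for the degree-$2$ map $\delta_N\colon \cpc{N,a}\to\cpc{N-1,a}$, whose ramification is pinned down by Lemma~\ref{ramification}.

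First I would dispense with irreducibility: since $\pc{N,a}$ is assumed nonsingular, Proposition~\ref{smooth} gives nonsingularity of $\pc{M,a}$ for every $1\le M\le N$, and then Proposition~\ref{irred} gives irreducibility of each $\pc{M,a}$ (hence of each $\cpc{M,a}$). In particular $\delta_M\colon\cpc{M,a}\to\cpc{M-1,a}$ is a well-defined degree-$2$ morphism of smooth projective irreducible curves for each $2\le M\le N$.

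Next, I would establish the base cases. For $N=1$, the defining polynomial $x^2+c-a$ is linear in $c$, so $\pc{1,a}$ is isomorphic to $\Aff^1$ via projection to $x$, giving $\cpc{1,a}\cong\PP^1$ and hence $g_1=0$; this matches $(1-3)2^{-1}+1=0$. Writing $g_N$ for the genus of $\cpc{N,a}$, the inductive step is a straightforward application of Riemann--Hurwitz. Since $\operatorname{char}(K)\neq 2$, the map $\delta_N$ is separable, and Lemma~\ref{ramification} says it ramifies at exactly $2^{N-1}$ points (all of the form $(0,c)$), each with ramification index $2$. Thus
\[
2g_N-2 \;=\; 2\bigl(2g_{N-1}-2\bigr)+2^{N-1},
\]
so $g_N=2g_{N-1}-1+2^{N-2}$.

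Finally, I would verify that the conjectured formula $g_N=(N-3)2^{N-2}+1$ satisfies this recurrence: substituting the inductive hypothesis $g_{N-1}=(N-4)2^{N-3}+1$ gives
\[
2g_{N-1}-1+2^{N-2} \;=\; (N-4)2^{N-2}+2-1+2^{N-2} \;=\; (N-3)2^{N-2}+1,
\]
closing the induction. There is no real obstacle here; the only subtle point is making sure that, in the inductive step, the nonsingularity hypothesis on $\pc{N,a}$ is enough to guarantee the hypothesis on $\pc{N-1,a}$ needed to apply the lemma and the induction, which is exactly what Proposition~\ref{smooth} provides. The rest is a one-line Riemann--Hurwitz computation.
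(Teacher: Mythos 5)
Your proposal is correct and follows essentially the same route as the paper: irreducibility via Propositions~\ref{smooth} and~\ref{irred}, base case $g(1)=0$, and the inductive step via Riemann--Hurwitz applied to $\delta_N$ using the ramification count of $2^{N-1}$ from Lemma~\ref{ramification}. The computation and closing of the induction match the paper's proof.
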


\begin{proof}
For each $M\le N$, the algebraic set $\pc{M,a}$ is nonsingular (by
Proposition~\ref{smooth}) and hence irreducible (by Proposition~\ref{irred}),
so also $\cpc{M,a}$ is irreducible.  All that remains is to calculate its
genus.

We proceed by induction on $N$.  Let $g(N)$ denote the genus of $\cpc{N,a}$.
Since $\pc{1,a}$ is defined by $x^2+c=a$, it is isomorphic to the $x$-line, so
$g(1)=0$ as desired.  Inductively, suppose $g(N-1)=(N-4)2^{N-3}+1$ for some
$N\ge 2$.  We compute $g(N)$ by applying the Riemann-Hurwitz formula to the
degree-$2$ morphism $\delta_N\colon\cpc{N,a}\to\cpc{N-1,a}$.
Lemma~\ref{ramification} shows that $\delta_N$ ramifies at precisely $2^{N-1}$
points, so
\begin{align*}
2g(N) - 2 &= 2\left[ 2g(N-1) - 2\right] +
	     \sum_{\substack{\text{ramified points} \\ \text{of $\cpc{N,a}$}}}1 \\
	  &= 2\left[ 2g(N-1) - 2\right] + 2^{N-1},\\
\intertext{whence}
g(N) &= 2g(N-1) - 1 + 2^{N-2} \\
     &= (N-4)2^{N-2} + 2 - 1 + 2^{N-2} \\
     &= (N-3)2^{N-2} + 1. \qedhere
\end{align*}
\end{proof}

\begin{example} \label{genusdata}
For a general choice of $a \in K$, we saw above that $\pc{N,a}$ is
irreducible and nonsingular.  Passing to the completed curves, the generic
picture looks like
\[
\cdots \stackrel{2-1}{\longrightarrow} \cpc{4,a}
 \stackrel{2-1}{\longrightarrow} \cpc{3,a}\stackrel{2-1}{\longrightarrow}
 \cpc{2,a} \stackrel{2-1}{\longrightarrow} \cpc{1,a}
\]
\[
\hspace{1.5cm} g(4) = 5 \hspace{1.1cm} g(3) = 1 \hspace{1cm} g(2) = 0
 \hspace{1cm} g(1) = 0 \hspace{0.4cm}
\]
The fact that $\cpc{4,a}$ has genus larger than~1 will be of arithmetic value
to us in the next section.

For later use, we also summarize the relevant behavior for small values of $N$
and those values of $a$ for which $\pc{N,a}$ is singular.  We used
Magma \cite{magma} to compute the data in the following table.

\renewcommand{\thetable}{\thethm}
\stepcounter{thm}
\begin{table}[pht]
\begin{tabular}{|c|c|c|c|}
\hline
	       & Algebraic & Irreducible &  \\
$a \in \bar{\QQ}$ & Set &    Components  & Genus \\
\hline
$a \in A_2$ & $\pc{2, -1/4}$ & $2$ & $0, 0$ \\
	    & $\pc{3, -1/4}$ & $2$ & $0, 0$ \\
	    & $\pc{4, -1/4}$ & $2$ & $1, 1$ \\
	    & $\pc{5, -1/4}$ & $2$ & $5, 5$ \\
\hline
$a \in A_3$ & $\pc{3, a}$ & $1$ & $0$ \\
	    & $\pc{4, a}$ & $1$ & $3$ \\
\hline
$a \in A_4$ & $\pc{4, a}$ & $1$ & $4$ \\
\hline
\end{tabular}

\caption{We denote by $A_N$ the set of values $a\in\overline\QQ$ for
which $\pc{N,a}$ is singular but $\pc{N-1,a}$ is nonsingular.  These sets may
be computed using the criterion in Proposition~\ref{smooth}.  For example,
$A_2 = \{-1/4\}$.  Also $\#A_3=3$ and $\#A_4=7$.  The last column gives the
genera of the irreducible components of the given algebraic set.}
\label{GenusTable}
\end{table}
\end{example}

\begin{remark}\label{remark special case}
The case $a=-1/4$ is of special interest for various reasons.  Here we note
that $\pc{4,-1/4}$ has infinitely many rational points (since each of its
components is the affine part of a rank-one elliptic curve over $\QQ$).  By
contrast, for any other value $a\in\QQ$, the above results imply that
$\pc{4,a}$ is an irreducible curve
of genus greater than one, and thus has only finitely many rational points by
the Mordell conjecture (Faltings' theorem \cite{faltings}).
\end{remark}


We now compute the gonality of $\cpc{N,a}$:
\begin{thm} \label{Thm: Gonality}
Let $a\in K$, and let $N\ge 2$ be an integer for which $\pc{N,a}$ is
nonsingular.  Then the gonality of $\cpc{N,a}$ is $2^{N-2}$.
\end{thm}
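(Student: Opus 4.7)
The plan is to establish matching upper and lower bounds on the gonality, each equal to $2^{N-2}$.

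The upper bound is immediate from the tower of degree-$2$ maps introduced in Section~\ref{Introduction}: the composition $\delta_3\circ\delta_4\circ\cdots\circ\delta_N$ is a morphism $\cpc{N,a}\to\cpc{2,a}$ of degree $2^{N-2}$. Theorem~\ref{genus} shows that $\cpc{2,a}$ has genus $0$, and since $K$ is algebraically closed this forces $\cpc{2,a}\cong\PP^1$, exhibiting a morphism of the required degree from $\cpc{N,a}$ to $\PP^1$.

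For the lower bound I would proceed by induction on $N$. The base case $N=2$ is trivial, since $\cpc{2,a}\cong\PP^1$ has gonality $1=2^0$. For the inductive step, assume the gonality of $\cpc{N-1,a}$ equals $2^{N-3}$ and let $\psi\colon\cpc{N,a}\to\PP^1$ be a non-constant morphism of degree $d$. If $\psi$ factors through $\delta_N$, say $\psi=\psi'\circ\delta_N$, then the inductive hypothesis gives $\deg(\psi')\ge 2^{N-3}$, whence $d\ge 2^{N-2}$. If $\psi$ does not factor through $\delta_N$, then since $\delta_N$ has degree $2$ there is no proper intermediate field between $\delta_N^*K(\cpc{N-1,a})$ and $K(\cpc{N,a})$, so the compositum $\delta_N^*K(\cpc{N-1,a})\cdot\psi^*K(\PP^1)$ must equal $K(\cpc{N,a})$. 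Equivalently, the induced morphism $\cpc{N,a}\to\cpc{N-1,a}\times\PP^1$ is birational onto its image, and the Castelnuovo--Severi inequality gives
\[
g(\cpc{N,a})\le 2\,g(\cpc{N-1,a})+(2-1)(d-1).
\]
Substituting $g(\cpc{N,a})=(N-3)2^{N-2}+1$ and $g(\cpc{N-1,a})=(N-4)2^{N-3}+1$ from Theorem~\ref{genus} rearranges to $d\ge 2^{N-2}$.

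The main obstacle I anticipate is verifying the hypotheses of the Castelnuovo--Severi inequality in the non-factoring case, namely that the two pulled-back subfields generate all of $K(\cpc{N,a})$; this is resolved cleanly by the degree-$2$ nature of $\delta_N$, which allows only two possibilities for the compositum. Beyond that the argument is essentially bookkeeping with the genus formula from Theorem~\ref{genus}.
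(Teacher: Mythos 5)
Your proof is correct and follows essentially the same route as the paper: the upper bound via the tower of degree-$2$ maps to $\cpc{2,a}\cong\PP^1$, and the lower bound by induction, splitting on whether the minimal-degree map factors through $\delta_N$ and invoking Castelnuovo's inequality in the non-factoring case. Your field-theoretic justification that $(\delta_N,\psi)$ is generically injective (no proper intermediate extension under a degree-$2$ cover) is exactly the point the paper compresses into one sentence, so this is a faithful reproduction of the argument.
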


Our proof uses Castelnuovo's bound on the
genus of a curve on a split surface (see \cite[2.16]{kani} or
\cite[Thm.~III.10.3]{Stichtenoth}):

\begin{thm}
Let $C_1$, $C_2$, and $C$ be smooth, projective, geometrically integral curves
over $K$, and supose there is a generically injective map
$\psi\colon C\to C_1\times_K C_2$.  Let $g_i$ be the genus of $C_i$, let
$\pi_i$ denote projection from $C_1\times_K C_2$ onto its $i\tth$ factor, and
let $n_i$ be the degree of the map $\pi_i\circ\psi\colon C\to C_i$.  Then the
genus $g$ of $C$ satisfies
\[
g \leq n_1g_1 + n_2g_2 + (n_1-1)(n_2-1).
\]
\end{thm}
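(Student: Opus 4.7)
The plan is to establish both the upper and lower bounds on the gonality. The upper bound is constructive, and the lower bound proceeds by induction on $N$ using Castelnuovo's inequality as stated just above the theorem.

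For the upper bound, I would observe that the composition $\delta_3\circ\delta_4\circ\cdots\circ\delta_N$ is a degree-$2^{N-2}$ morphism from $\cpc{N,a}$ onto $\cpc{2,a}$. By Theorem~\ref{genus}, $\cpc{2,a}$ has genus zero, hence over the algebraically closed field $K$ is isomorphic to $\PP^1$. Composing with this isomorphism yields a morphism $\cpc{N,a}\to\PP^1$ of degree $2^{N-2}$, so the gonality is at most $2^{N-2}$.

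For the lower bound, I would induct on $N$. The base case $N=2$ is immediate since $\cpc{2,a}\cong\PP^1$ has gonality $1=2^{0}$. For $N\ge 3$, suppose $\map\col\cpc{N,a}\to\PP^1$ has degree $n$, and consider the morphism $\psi:=(\delta_N,\map)\col\cpc{N,a}\to\cpc{N-1,a}\times\PP^1$. Since $\delta_N$ is a degree-$2$ Galois cover with deck involution $\iota\col(x,c)\mapsto(-x,c)$, exactly one of the following holds: either $\psi$ is generically injective, or $\map\circ\iota=\map$ and hence $\map$ factors through $\delta_N$ as $\map=\map'\circ\delta_N$ for some non-constant $\map'\col\cpc{N-1,a}\to\PP^1$. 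In the factoring case the inductive hypothesis gives $n=2\deg(\map')\ge 2\cdot 2^{N-3}=2^{N-2}$. In the generically injective case, Castelnuovo's bound with $n_1=2$, $n_2=n$, $g_1=g(\cpc{N-1,a})=(N-4)2^{N-3}+1$, and $g_2=0$, combined with the formula $g(\cpc{N,a})=(N-3)2^{N-2}+1$ from Theorem~\ref{genus}, yields
\[
(N-3)2^{N-2}+1 \;\le\; 2\bigl[(N-4)2^{N-3}+1\bigr]+(n-1),
\]
which simplifies to $n\ge 2^{N-2}$.

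The main obstacle is justifying the dichotomy in the inductive step: that failure of generic injectivity of $\psi$ forces $\map$ to be $\iota$-invariant, and that $\iota$-invariance in turn forces $\map$ to descend through $\delta_N$. Both are standard Galois theory of degree-$2$ coverings in characteristic different from $2$ (the $\iota$-invariant subfield of $K(\cpc{N,a})$ is exactly $K(\cpc{N-1,a})$), but they must be stated with care. Beyond that, the remaining computations are a routine simplification of the Castelnuovo estimate, and the nonsingularity of $\pc{N-1,a}$ needed to apply Theorem~\ref{genus} at the previous stage is guaranteed by Proposition~\ref{smooth}.
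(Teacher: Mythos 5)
You have proven the wrong statement. The theorem you were asked about is Castelnuovo's genus bound for a curve admitting a generically injective map into a product of two curves (the inequality $g\le n_1g_1+n_2g_2+(n_1-1)(n_2-1)$), which the paper does not prove at all — it is quoted from Kani \cite[2.16]{kani} and Stichtenoth \cite[Thm.~III.10.3]{Stichtenoth}. What you have written is instead a proof of Theorem~\ref{Thm: Gonality} (that $\cpc{N,a}$ has gonality $2^{N-2}$), and your argument explicitly \emph{invokes} Castelnuovo's inequality as a black box in the inductive step, so it cannot be read as a proof of that inequality. To actually prove Castelnuovo's bound you would need a different argument entirely, e.g.\ intersection theory on the surface $C_1\times_K C_2$ (adjunction together with the Hodge index theorem, or Kani's approach via the equivalence defect).

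Taken on its own terms as a proof of the gonality theorem, your argument is correct and matches the paper's: the upper bound comes from the degree-$2^{N-2}$ composite $\cpc{N,a}\to\cpc{2,a}\cong\PP^1$, and the lower bound is by induction, splitting into the case where a minimal-degree $\phi\colon\cpc{N,a}\to\PP^1$ factors through $\delta_N$ and the case where $(\delta_N,\phi)$ is generically injective, then applying Castelnuovo with $n_1=2$, $g_2=0$. Your explicit justification of the dichotomy — if $(\delta_N,\phi)$ is not generically injective then, since $\delta_N$ has degree two with deck involution $\iota\colon(x,c)\mapsto(-x,c)$, one gets $\phi\circ\iota=\phi$ and hence $\phi$ descends through the degree-two quotient $\delta_N$ — is a correct and worthwhile elaboration of a step the paper leaves to the reader. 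The arithmetic simplification of Castelnuovo's estimate using the genus formula from Theorem~\ref{genus} is also carried out correctly.
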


\begin{proof}[Proof of Theorem~\ref{Thm: Gonality}]
By Theorem~\ref{genus}, the curve $\cpc{2,a}$ has genus zero, so it is
isomorphic to $\PP^1$.  The composition
\[
\delta_N \circ \cdots \circ \delta_3\colon \cpc{N,a} \to \cpc{2,a} \cong \PP^1
\]
has degree~$2^{N-2}$, so the gonality of $\cpc{N,a}$ is at most $2^{N-2}$.  We
prove equality by induction on $N$.  Since this is clear for $N=2$, we may
assume that $\cpc{N-1,a}$ has gonality $2^{N-3}$.  Let
$\phi\colon \cpc{N,a} \to \PP^1$ be a non-constant morphism of
minimal degree.  If $\phi$ factors through the map $\delta_N$, then
$\deg \phi$ is twice the gonality of $\cpc{N-1,a}$, as desired.  So assume
$\phi$ does not factor through $\delta_N$.  Since $\delta_N$ has degree $2$,
it follows that the map
\[
(\delta_N,\phi)\colon\cpc{N,a}\to \cpc{N-1,a}\times \PP^1
\]
is generically injective, and now Castelnuovo's inequality implies that
\begin{align*}
g(N) &\leq 2g(N-1) + (2-1)(\deg \phi -1) \\
(N-3)2^{N-2} + 1 &\leq 2\left((N-4)2^{N-3}+1\right) + \deg \phi -1 \\
2^{N-2} &\leq \deg \phi.
\end{align*}
Thus the gonality of $\cpc{N,a}$ is $\deg \phi = 2^{N-2}$.
\end{proof}

\begin{cor}\label{Cor: Elliptic Gonality}
Let $a\in K$, and let $N\ge 3$ be an integer for which $\pc{N,a}$ is
nonsingular.  Then $2^{N-3}$ is the minimal degree of any nonconstant
morphism from $\cpc{N,a}$ to a genus one curve.
\end{cor}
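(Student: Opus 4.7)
The plan is to mirror the strategy used to prove Theorem~\ref{Thm: Gonality}, replacing $\PP^1$ with a genus-one curve and replacing $\cpc{2,a}$ with $\cpc{3,a}$. For the upper bound, Theorem~\ref{genus} gives $g(3) = (3-3)2^{3-2} + 1 = 1$, so $\cpc{3,a}$ is a genus-one curve; hence the composition
\[
\delta_4 \circ \delta_5 \circ \cdots \circ \delta_N \colon \cpc{N,a} \to \cpc{3,a}
\]
is a non-constant morphism of degree $2^{N-3}$ from $\cpc{N,a}$ to a genus-one curve.

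For the lower bound, I would induct on $N \geq 3$. The base case $N=3$ is trivial, since the identity $\cpc{3,a} \to \cpc{3,a}$ has degree $2^{0} = 1$. For the inductive step, suppose $\phi \colon \cpc{N,a} \to E$ is a non-constant morphism of minimal degree to a genus-one curve $E$. If $\phi$ factors through $\delta_N$ as $\phi = \psi \circ \delta_N$ with $\psi \colon \cpc{N-1,a} \to E$ non-constant, then $\deg \phi = 2\deg \psi \ge 2 \cdot 2^{N-4} = 2^{N-3}$ by the induction hypothesis. Otherwise, since $\deg \delta_N = 2$, the morphism
\[
(\delta_N,\phi) \colon \cpc{N,a} \to \cpc{N-1,a} \times_K E
\]
is generically injective, so Castelnuovo's inequality (with $n_1 = 2$, $g_1 = g(N-1)$, $n_2 = \deg \phi$, $g_2 = 1$) yields
\[
g(N) \le 2g(N-1) + \deg \phi + (\deg \phi - 1).
\]
Substituting the closed forms from Theorem~\ref{genus}, namely $g(N) = (N-3)2^{N-2}+1$ and $g(N-1)=(N-4)2^{N-3}+1$, the inequality simplifies to $2^{N-2} \le 2 \deg \phi$, i.e., $\deg \phi \ge 2^{N-3}$.

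The only subtlety worth spelling out is the factoring dichotomy: because $\delta_N$ has degree exactly $2$, for a generic point $Q \in \cpc{N-1,a}$ the fiber $\delta_N^{-1}(Q)$ consists of two points, and either $\phi$ takes the same value on both (in which case $\phi$ descends through $\delta_N$ by the universal property of the quotient, since $\cpc{N-1,a}$ is smooth) or the two values differ, in which case $(\delta_N,\phi)$ separates them and is generically injective. Aside from this observation, the proof is a routine application of the same tools used for Theorem~\ref{Thm: Gonality}; I do not anticipate a significant obstacle.
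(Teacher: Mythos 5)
Your argument is correct, and the Castelnuovo computation checks out: with $n_1=2$, $g_1=g(N-1)$, $n_2=\deg\phi$, $g_2=1$ one gets $g(N)\le 2g(N-1)+2\deg\phi-1$, which together with Theorem~\ref{genus} forces $\deg\phi\ge 2^{N-3}$, and your discussion of the factoring dichotomy (descend through the degree-two quotient, or $(\delta_N,\phi)$ is generically injective) is the right way to justify the case split. However, this is a genuinely different and heavier route than the paper's. The paper's proof of the lower bound is a one-liner that uses Theorem~\ref{Thm: Gonality} as a black box: any genus-one curve $E$ admits a degree-$2$ morphism to $\PP^1$, so composing a hypothetical $\phi\colon\cpc{N,a}\to E$ of degree $d$ with such a map gives a morphism $\cpc{N,a}\to\PP^1$ of degree $2d$, whence $2d\ge 2^{N-2}$ by the gonality theorem, i.e., $d\ge 2^{N-3}$. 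Your version essentially re-proves the gonality theorem for genus-one targets instead of deducing the corollary from it, so it duplicates the Castelnuovo induction; the payoff is that it is self-contained modulo the genus formula, but the paper's argument is shorter and makes the logical dependence on Theorem~\ref{Thm: Gonality} transparent.
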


\begin{proof}
Since the gonality of $\cpc{N,a}$ is $2^{N-2}$, and any genus one curve
admits a degree-$2$ map to $\PP^1$, any nonconstant morphism from $\cpc{N,a}$
to a genus-$1$ curve has degree at least $2^{N-3}$.  Conversely, this degree
occurs for the map
\[
\delta_N \circ \cdots \circ \delta_4\colon \cpc{N,a} \to \cpc{3,a}.\qedhere
\]
\end{proof}


\section{Arithmetic of pre-images}
\label{Sec: Uniformity}

Let $K$ be a number field.  For $a, c \in K$, we are interested in the size of
\[
\{ x_0 \in K : f_c^N(x_0) = a \text{ for some } N \ge 1 \},
\]
the set of pre-images of $a$ under iterates of $f_c$.  These sets can be
arbitrarily large if we allow
$a$ to vary (even if $c$ is fixed).  Indeed, if we choose $b \in K$ to be a
non-preperiodic point for $f_c$, and put $a = f_c^N(b)$, then the above set
contains (at least) the $N$ elements $b, f_c(b), \ldots, f_c^{N-1}(b)$.
In this section we show that the situation is different if we fix $a$ and
allow $c$ to vary.

In particular, we prove Theorem~\ref{Thm main intro}.  To illustrate the
method, we begin by proving the following special case (in which no
values $a$ need to be excluded):

\begin{thm} \label{Thm: No Exceptional Points}
Let $K$ be a number field, and pick $a \in K$.  There is an integer
$\nu(K, a)$ such that any $c \in K$ satisfies
\[
\# \left\{ x_0 \in K: f_c^N(x_0) = a \text{ for some } N \ge 1\right\} \le
\nu(K, a).
\]
\end{thm}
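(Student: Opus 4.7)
My plan is to combine Faltings' theorem applied to $\cpc{N_0,a}$ (for a suitable $N_0 \in \{4,5\}$) with a classical Northcott-type height argument for backward orbits under a fixed polynomial, following the outline sketched after Theorem~\ref{Thm main intro}. First I would choose $N_0 = N_0(a) \in \{4,5\}$ so that every geometric component of $\pc{N_0,a}$ has geometric genus at least~$2$. For $a \ne -1/4$, Theorem~\ref{genus} together with the data in Example~\ref{genusdata} show that $\pc{4,a}$ is irreducible of genus at least~$3$, so $N_0 = 4$ works; for $a = -1/4$, the curve $\pc{4,-1/4}$ has two components of genus~$1$, but $\pc{5,-1/4}$ has two components each of genus~$5$, so $N_0 = 5$ works. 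Faltings' theorem, applied separately to each geometric component (over a finite extension $L$ of $K$ over which that component is defined), then yields that $\pc{N_0,a}(K)$ is a finite set.

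Next I would bound the $K$-rational backward orbit of $y_0$ under $f_{c_0}$ for each $(y_0,c_0)$ in the finite set $\pc{N_0,a}(K)$. Using the canonical height $\hat h_{f_{c_0}}$ attached to the map $f_{c_0}$, if $f_{c_0}^M(x) = y_0$ then $\hat h_{f_{c_0}}(x) = 2^{-M}\,\hat h_{f_{c_0}}(y_0) \le \hat h_{f_{c_0}}(y_0)$; since $\hat h_{f_{c_0}}$ differs from the Weil height by a constant depending only on $c_0$, Northcott's theorem forces only finitely many such $x \in K$. Taking the union over the finite set $\pc{N_0,a}(K)$, the total set $T := \bigcup_{N \ge N_0} \pc{N,a}(K) \subset K^2$ is finite, say of size $M_2 = M_2(K,a)$.

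To conclude, for each $c \in K$ write the set in the theorem as $\bigcup_{N \ge 1} S_N(c)$ with $S_N(c) := \{x_0 \in K : f_c^N(x_0) = a\}$. For $N \ge N_0$, each element of $S_N(c)$ is the first coordinate of a pair in $T$, so $|\bigcup_{N \ge N_0} S_N(c)| \le M_2$. For $1 \le N < N_0$, the polynomial $f_c^N(x) - a$ has degree $2^N$ in $x$, giving $|S_N(c)| \le 2^N$ and hence $|\bigcup_{N < N_0} S_N(c)| \le 2^{N_0} - 2$. Setting $\nu(K,a) := M_2 + 2^{N_0} - 2$ then suffices.

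The main obstacle is the value $a = -1/4$: the drop of genus from the generic value~$5$ to~$1$ at level~$4$ rules out a uniform choice $N_0 = 4$, so one must climb one more level in the pre-image tower and verify that the genus-$5$ components of $\pc{5,-1/4}$ are amenable to Faltings. A lesser subtlety is that the Northcott bound a priori depends on each individual $c_0$, but since $\pc{N_0,a}(K)$ is already finite, this causes no problem for the final uniformity in $c$.
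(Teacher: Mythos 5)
Your proposal is correct and follows essentially the same route as the paper's own argument: choose $N_0 \in \{4,5\}$ so that every geometric component of $\pc{N_0,a}$ has genus at least $2$ (using Theorem~\ref{genus} and the data of Table~\ref{GenusTable}), apply Faltings' theorem to get finiteness of $\pc{N_0,a}(K)$, then use the canonical-height/Northcott argument (Lemma~\ref{heights}) to control the deeper pre-image levels, and bound the shallow levels $N < N_0$ by the degree $2^N$ of $f_c^N$. The only cosmetic difference is that the paper bounds $\#V_c$ per value of $c$ and takes a maximum $S$, while you bound the whole set $T=\bigcup_{N\ge N_0}\pc{N,a}(K)$ at once; both yield the same conclusion.
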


\begin{proof}
Suppose $M>0$ is chosen so that $\pc{M,a}(K)$ is finite.
For each $c\in K$, we must bound the union of the following two sets:
\begin{align*}
U_c &:= \{ x_0 \in K : f_c^N(x_0) = a \text{ for some } N < M \} \\
V_c &:= \{ x_0 \in K : f_c^N(x_0) = a \text{ for some } N \ge M \}.
\end{align*}
For fixed $c$ and $N$, the polynomial $f_c^N(z)$ has degree $2^N$, so
$\#U_c\le \sum_{N=1}^{M-1} 2^N = 2^M-2$.  If $V_c$ is nonempty, so
$f_c^N(x_0)=a$ for some $N\ge M$ and $x_0\in K$, then
$(f_c^{N-M}(x_0),c)\in\pc{M,a}(K)$.  Hence there are only finitely many
$c\in K$ for which $\#V_c>0$, and for each such $c$ the following lemma
shows that $V_c$ is finite.  Letting $S$ be the maximum value of $\#V_c$,
it follows that $\#(U_c\cup V_c)\le 2^M-2+S$.

It remains to prove that $\pc{M,a}(K)$ is finite for some $M$.
If $\pc{4,a}$ is nonsingular, then $\cpc{4,a}$ has genus~$5$ by
Theorem~\ref{genus}.  We apply the Mordell conjecture (Faltings' theorem)
to conclude that $\cpc{4,a}(K)$ is finite.  This implies that
$\pc{4,a}(K)$ is finite, so we may take $M=4$.
If $\pc{4,a}$ is singular and $a\ne -1/4$, then (as noted in
Table~\ref{GenusTable}) $\pc{4,a}$ is geometrically irreducible of genus
more than $1$, so again Faltings' theorem implies $\pc{4,a}(K)$ is
finite.  Finally, if $a=-1/4$ then (again from Table~\ref{GenusTable})
the set $\pc{5,a}$ has two geometrically irreducible components, both of
genus $5$, so again Faltings' theorem implies $\pc{5,a}(K)$ is finite.
Thus, for each $a\in K$, we have exhibited an integer $M$ for which
$\pc{M,a}(K)$ is finite, and the proof is complete.
\end{proof}

\begin{lem} \label{heights}
Let $a, c$ be elements of a number field $K$.  For any integer $B$, the set
\[
\left\{ x_0 \in \bar{\QQ} : [K(x_0)\col K] \leq B \text{ and }
                               f_c^N(x_0) =a \text{ for some } N\ge 1 \right\}
\]
is finite.
\end{lem}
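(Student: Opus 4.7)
The plan is to prove finiteness via a height argument combined with Northcott's theorem. Let $h$ denote the standard logarithmic Weil height on $\bar{\QQ}$, and let $\hat h_{f_c}$ denote the Call--Silverman canonical height attached to the polynomial map $f_c$. I will use two standard properties of $\hat h_{f_c}$: the functional equation $\hat h_{f_c}(f_c(x)) = 2\hat h_{f_c}(x)$ for all $x \in \bar{\QQ}$, and the comparison estimate $|h(x) - \hat h_{f_c}(x)| \le C(c)$ for some constant $C(c)$ depending only on $c$.

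If $x_0 \in \bar{\QQ}$ satisfies $f_c^N(x_0) = a$ for some $N\ge 1$, then iterating the functional equation gives
\[
2^N\hat h_{f_c}(x_0) \;=\; \hat h_{f_c}(f_c^N(x_0)) \;=\; \hat h_{f_c}(a),
\]
so $\hat h_{f_c}(x_0) \le \hat h_{f_c}(a)$ and hence $h(x_0) \le \hat h_{f_c}(a) + C(c)$. This upper bound depends only on $a$ and $c$, which are fixed. Meanwhile, the hypothesis $[K(x_0):K] \le B$ yields $[\QQ(x_0):\QQ] \le B[K:\QQ]$, a uniform degree bound. Northcott's theorem asserts that only finitely many algebraic numbers simultaneously have bounded height and bounded degree over $\QQ$, and the conclusion follows.

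There is no serious obstacle; the argument is essentially a direct application of standard machinery from arithmetic dynamics. The only mild subtlety is that the comparison constant $C(c)$ genuinely depends on $c$, so the height bound on $x_0$ is not uniform in $c$---but this is harmless for the present lemma, where $c$ is held fixed. I observe in passing that the argument actually dichotomizes: if $a$ is preperiodic for $f_c$, then $\hat h_{f_c}(a)=0$ forces every such $x_0$ to be preperiodic, whereas if $a$ is not preperiodic the heights $\hat h_{f_c}(x_0) = 2^{-N}\hat h_{f_c}(a)$ tend to $0$. In either case the bounded-degree hypothesis is what converts the bounded-height conclusion into finiteness.
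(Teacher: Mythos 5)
Your argument is essentially identical to the paper's: both use the Call--Silverman canonical height for $f_c$, the functional equation $\hat h(f_c(x))=2\hat h(x)$, the bounded difference $\hat h = h + O(1)$, and Northcott's theorem to conclude. The proof is correct and takes the same route.
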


\begin{proof}
We use standard properties of canonical heights of morphisms, which can
be found for instance in \cite[\S3.4]{ads}.  The canonical height function
$\hat{h}$ associated to $f_c$ satisfies the properties
\begin{gather*}
\hat{h}(z)\ge 0\\
\hat{h}(f_c(z))=2\hat{h}(z)\\
\hat{h}(z)=h(z)+O(1)
\end{gather*}
for all $z\in\bar\QQ$, where $h$ is the absolute logarithmic Weil height and
the implied constant depends only on $c$.

If $f^N_c(x_0)=a$ for some $N\ge 1$, then
\begin{align*}
h(x_0) = \hat{h}(x_0) + O(1) = 2^{-N} \hat{h}(a) + O(1) \leq \hat{h}(a) + O(1)
 = h(a) + O(1).
\end{align*}
In particular, the set described in the lemma is a collection of algebraic
numbers of bounded height and degree, and so is finite (for instance by
\cite[Thm.~3.7]{ads}).
\end{proof}

The proof of Theorem~\ref{Thm main intro} follows the same strategy as
that of Theorem~\ref{Thm: No Exceptional Points}, but instead of Faltings'
theorem we use a consequence of a more powerful theorem due to
Vojta.  We need some notation to state this consequence.

If $\map \colon C \to C'$ is a non-constant morphism of smooth projective
curves with ramification divisor $R_\phi$, define
\[
\rho(\map) = \frac{\deg R_\phi}{2\deg \map}.
\]

\begin{thm}[Song--Tucker--Vojta] \label{stthm}
If $\map \colon C\to C'$ is a non-constant morphism of smooth projective curves
defined over a number field $K$, then the set
\[
\Gamma( C, \map ) = \left\{ P \in C(\bar{\QQ}) : [K(P)\col K] < \rho(\map)
                                       \text{ and } K(\map(P)) = K(P) \right\}
\]
is finite.
\end{thm}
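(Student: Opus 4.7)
The plan is to apply Vojta's inequality on arithmetic discriminants to $C$, combined with a relative-discriminant estimate on $C'$ enabled by the hypothesis $K(P)=K(\phi(P))$; the two inputs squeeze the ramification height $h_{R_\phi}(P)$, whose leading coefficient carries the invariant $\rho(\phi)$.

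First I would reduce to the case $g(C)\ge 2$. Riemann--Hurwitz gives the identity $\rho(\phi)=(g(C)-1)/\deg(\phi)-(g(C')-1)$, and since $g(C')\ge 0$ one checks that $\rho(\phi)\le 1$ whenever $g(C)\le 1$, so the condition $[K(P)\col K]<\rho(\phi)$ is vacuous. Assume now $g(C)\ge 2$, fix an ample divisor $A$ on $C$, and pick $\epsilon>0$. Vojta's theorem on arithmetic discriminants yields, for any bound $d$ on $[K(P)\col K]$, an inequality
$$h_{K_C}(P)\le d_a(P)+\epsilon\,h_A(P)+O_\epsilon(1)$$
for $P\in C(\bar{\QQ})$ with $[K(P)\col K]\le d$, where $d_a(P)=d(K(P)/K)/[K(P)\col K]$ is the normalized logarithmic discriminant. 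Decomposing the left-hand side via the Riemann--Hurwitz linear equivalence $K_C\sim \phi^*K_{C'}+R_\phi$ turns this into
$$h_{K_{C'}}(\phi(P))+h_{R_\phi}(P)\le d_a(P)+\epsilon\,h_A(P)+O_\epsilon(1).$$

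Second, I would estimate $d_a(P)$ from the $C'$ side. Because $K(P)=K(\phi(P))$, the number field $K(P)$ is generated by the coordinates of $\phi(P)\in C'(\bar{\QQ})$, and a Dedekind/Hermite-type bound on the relative discriminant of a degree-$n$ point (refined in the presence of a morphism by Song and Tucker) produces an estimate of the form
$$d_a(P)\le(2n-2)\,h_{C'}(\phi(P))+\epsilon\,h_A(P)+O_\epsilon(1),$$
where $n=[K(P)\col K]$ and $h_{C'}$ is a normalized Weil height on $C'$. Substituting into the previous display, choosing $A=\phi^*A'$ for an ample divisor $A'$ on $C'$ so that $h_A(P)=h_{A'}(\phi(P))$, and using $\deg R_\phi=2\deg(\phi)\cdot\rho(\phi)$, one passes to a subsequence on which $n$ is constant and $h_A(P)\to\infty$. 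Comparing leading coefficients in $h_A(P)$ on the two sides of the resulting inequality gives
$$\rho(\phi)\le n-g(C')+O(\epsilon)\le n+O(\epsilon).$$
For $\epsilon$ sufficiently small and $n$ an integer, this contradicts the hypothesis $n<\rho(\phi)$, so $h_A(P)$ must be bounded on $\Gamma(C,\phi)$, and Northcott's theorem supplies finiteness.

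The principal obstacle is the bound on $d_a(P)$ used in the second step: obtaining the coefficient $2n-2$, rather than one weaker involving $g(C')$ and the full canonical class of $C'$, is exactly what forces the final numerical threshold to match $\rho(\phi)$ rather than a coarser invariant. This sharpening over the naive Dedekind bound is the Song--Tucker contribution, and rests on a careful analysis of how the ramification of $\phi$ interacts with the ramification of the number field extension $K(P)/K$.
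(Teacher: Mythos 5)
The paper does not prove this theorem; it is quoted from the literature, with the authors noting that Vojta established the case $C'=\PP^1$ (as a corollary of his arithmetic-discriminant inequality) and that Song and Tucker extended the argument to arbitrary $C'$. So there is no ``paper's proof'' against which to compare. What you have done is reconstruct, from first principles, the strategy of those cited papers, and your reconstruction is essentially right: reduce to $g(C)\ge 2$ by computing $\rho(\phi)=(g(C)-1)/\deg\phi-(g(C')-1)$ via Riemann--Hurwitz; apply Vojta's inequality $h_{K_C}(P)\le d_a(P)+\epsilon h_A(P)+O_\epsilon(1)$; split $K_C\sim\phi^*K_{C'}+R_\phi$ to isolate the term $h_{R_\phi}(P)$, whose per-unit-degree rate against $h_{\phi^*A'}$ is precisely $2\rho(\phi)$; bound $d_a(P)$ from above by a Hermite--Minkowski-type estimate $(2n-2)h(\phi(P))+O(\epsilon h)$, legitimate because $K(P)=K(\phi(P))$; and compare leading coefficients along an infinite sequence with $n$ fixed and height unbounded (Northcott), obtaining $\rho(\phi)\le n-g(C')+O(\epsilon)$, which for small $\epsilon$ contradicts the integrality of $n<\rho(\phi)$.

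Two small calibration issues. First, the closing remark misattributes the $(2n-2)$ coefficient: that bound is the classical discriminant-versus-height estimate for a degree-$n$ algebraic number, already present in Vojta's $\PP^1$ case, and is not a Song--Tucker sharpening. The actual Song--Tucker contribution is the passage from $C'=\PP^1$ to arbitrary $C'$, which requires carrying the $d_a$-estimate through a non-rational target; your appeal to ``a careful analysis of how the ramification of $\phi$ interacts with the ramification of $K(P)/K$'' overstates what is needed at that step. Second, one should flag that the implied constants in Vojta's inequality depend on the degree bound $d$; here $d$ may be taken to be $\lceil\rho(\phi)\rceil-1$, which is fixed once $\phi$ is, so this is harmless, but a fully written proof should make the dependence explicit before letting $h_A(P)\to\infty$. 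Neither issue affects the correctness of the argument's skeleton, which matches the cited sources.
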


Vojta proved this result in case $C'=\PP^1$
(see \cite[Cor.~0.3]{vojta} and \cite[Thm.~A]{vojta2}),
as a consequence of a deep inequality on arithmetic discriminants.
Song and Tucker \cite[Prop.~2.3]{songtucker} generalized Vojta's proof to
deduce Theorem~\ref{stthm} for arbitrary $C'$.  Note that Theorem~\ref{stthm}
implies the Mordell conjecture:
if $C$ has genus at least $2$, then any non-constant morphism
$\map\colon C\to\PP^1$ satisfies $\rho(\map)>1$, so the finite set
$\Gamma(C,\map)$ includes $C(K)$.

\begin{remark}
We advise the reader of some typographical errors in \cite{songtucker}.
Specifically, the inequality $\ge$ in \cite[Cor.~2.1]{songtucker} should be a
strict inequality $>$, the displayed equality in \cite[Rem.~2.4]{songtucker}
should say $\deg R_f = (2g-2) - (2g'-2)\deg f$, and the inequality $>$ in
the next line should be $<$.
\end{remark}

We will apply Theorem~\ref{stthm} to composite maps of the form
$\delta_M\circ \delta_{M+1}\circ \dots \circ \delta_{M+J}$.  First we give a
consequence of Theorem~\ref{stthm} for arbitrary composite maps.

\begin{lem} \label{stlemma}
Let
\[
X_N\stackrel{\map_N}{\longrightarrow}X_{N-1}
\stackrel{\map_{N-1}}{\longrightarrow}\ \cdots\
\stackrel{\map_{3}}{\longrightarrow}  X_2
\stackrel{\map_{2}}{\longrightarrow}X_1\stackrel{\map_1}{\longrightarrow} X_0
\]
be a sequence of smooth projective curves defined over a number field $K$,
equipped with non-constant $K$-morphisms $\map_M\colon X_M\rightarrow X_{M-1}$
for each $1\leq M\leq N$, and put
\begin{align*}
B_N &:= \min_{ 1 \leq M \leq N} 2^{N-M} \rho(\map_M) \\
b_N &:= \min_{ 1 \leq M \leq N} \rho(\map_M).
\end{align*}
Then the set
\begin{equation} \label{finiteset}
\left\{P\in X_N(\bar{\QQ}):[K(P)\col K]< B_N\text{ and }
		[K(\map_1\circ\cdots \circ \map_N(P))\col K]\ge b_N\right\}
\end{equation}
is finite.
\end{lem}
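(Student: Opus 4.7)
The plan is to apply Theorem~\ref{stthm} to each individual morphism $\map_M\colon X_M\to X_{M-1}$ and to pigeonhole over the tower of residue fields attached to $P$ and its successive images.

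Given $P$ in the set \eqref{finiteset}, I set $q_M:=\map_M\circ\cdots\circ\map_N(P)\in X_{M-1}(\bar\QQ)$ for $1\le M\le N$ and $q_{N+1}:=P$, and let $d_M:=[K(q_{M+1})\col K(q_M)]$; these produce the tower
\[
K(q_1)\subseteq K(q_2)\subseteq\cdots\subseteq K(q_{N+1})=K(P).
\]
The crux is to produce an index $M^*\in\{1,\dots,N\}$ with $d_{M^*}=1$ and $[K(q_{M^*+1})\col K]<\rho(\map_{M^*})$. Given such $M^*$, the equality $K(\map_{M^*}(q_{M^*+1}))=K(q_{M^*})=K(q_{M^*+1})$ places $q_{M^*+1}$ in the finite set $\Gamma(X_{M^*},\map_{M^*})$ supplied by Theorem~\ref{stthm}, while $P$ is one of finitely many preimages of $q_{M^*+1}$ under the finite morphism $\map_{M^*+1}\circ\cdots\circ\map_N$. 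Summing these finite contributions over $M^*\in\{1,\dots,N\}$ then gives the desired finiteness.

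To produce $M^*$ I would argue by contradiction, assuming that for every $M\in\{1,\dots,N\}$ either $d_M\ge 2$ or $[K(q_{M+1})\col K]\ge\rho(\map_M)$. If there is some $M$ with $d_M=1$, take $M_0$ to be the largest such index; then $d_{M_0+1},\dots,d_N\ge 2$, so
\[
[K(P)\col K]=[K(q_{M_0+1})\col K]\prod_{i=M_0+1}^N d_i\ge 2^{N-M_0}\rho(\map_{M_0})\ge B_N,
\]
contradicting $[K(P)\col K]<B_N$. Otherwise every $d_M\ge 2$, so $[K(P)\col K]\ge 2^N[K(q_1)\col K]\ge 2^N b_N$; but if $M_b\in\{1,\dots,N\}$ achieves the minimum defining $b_N$ then $B_N\le 2^{N-M_b}b_N\le 2^{N-1}b_N<2^N b_N$, again contradicting $[K(P)\col K]<B_N$. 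This second case is precisely where the hypothesis $[K(q_1)\col K]\ge b_N$ is essential.

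The main obstacle is exactly this pigeonhole step: one has to select $M^*$ so as to balance the multiplicative growth of residue fields up the tower against the mixed factors $2^{N-M}\rho(\map_M)$ built into $B_N$, with the lower bound on $[K(q_1)\col K]$ used precisely to rule out the case in which the fields grow in every step. Everything afterward is routine: Theorem~\ref{stthm} handles one layer of the tower at a time, preimage counts under finite morphisms are bounded by degree, and the final union is taken over only $N$ indices.
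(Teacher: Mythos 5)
Your proposal is correct and follows essentially the same strategy as the paper: apply Theorem~\ref{stthm} layer by layer, use the tower of residue fields $K(q_1)\subseteq\cdots\subseteq K(P)$, and multiply degrees to reach $B_N$. The paper phrases it directly (defining the finite union $\Gamma$ upfront and showing $P\notin\Gamma$ with $[K(q_1)\col K]\ge b_N$ forces $[K(P)\col K]\ge B_N$), while you reach the same conclusion by contradiction, choosing $M_0$ as the largest index with $d_{M_0}=1$; the two index choices need not coincide but yield the same bound.
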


\begin{proof}
By Theorem~\ref{stthm}, for each $M$ with $1\le M\le N$ the set
\[
\Gamma(M) := \left\{ P \in X_{M}(\bar{\QQ}) : [K(P)\col K] < \rho(\map_M)
                    \text{ and } K(P) = K(\map_M(P)) \right\}
\]
is finite.  For $1\le M\le N$, define $\psi_M\colon X_N\to X_{N-M}$ by
\[
\psi_M := \map_{N - M + 1} \circ \map_{N - M + 2} \circ \cdots \circ \map_N,
\]
and let $\psi_0$ be the identity on $X_N$.  Since $\psi_M$ is a finite
morphism,
\[
\Gamma := \bigcup_{M=0}^N \left\{ P \in X_N(\bar{\QQ}):
 \psi_{M}(P) \in \Gamma(N-M) \right\}
\]
is a finite union of finite sets, and so is finite.
We will show that if $P \in X_N(\bar{\QQ}) \setminus \Gamma$ satisfies
$[K(\psi_N(P))\col K] \ge b_N$ then $[K(P)\col K] \ge B_N$, which proves
that the set defined in \eqref{finiteset} is contained in the finite set
$\Gamma$.

Suppose $P \in X_N(\bar{\QQ}) \setminus \Gamma$ satisfies
$[K(\psi_N(P))\col K] \ge b_N$.  Then
\[
K\left( \psi_N(P) \right) \subset K\left( \psi_{N-1}(P) \right) \subset \cdots
\subset K\left( \psi_0(P) \right) = K(P).
\]
If we choose $j$ with $0\le j\le N-1$ and $\rho(\map_{N-j})=b_N$, then
\[
[K(\psi_j(P))\col K] \ge [K(\psi_N(P))\col K] \ge b_N = \rho(\map_{N-j}).
\]

Let $0 \leq J\leq N-1$ be the least integer such that
\[
[K(\psi_J(P))\col K] \ge \rho(\map_{N-J}).
\]
We may assume $J\ge 1$, since otherwise we obtain the desired conclusion
\[
[K(P)\col K] =[K(\psi_0(P))\col K]\ge \rho(\map_N) \ge B_N.
\]
By minimality, for $0\le j < J$ we have
\[
[K(\psi_j(P))\col K] < \rho(\map_{N-j});
\]
but $P\notin\Gamma$ implies $\psi_j(P)\notin\Gamma(N-j)$, so
\[
K(\psi_j(P)) \neq K(\psi_{j+1}(P)),
\]
and thus $[K(\psi_j(P))\col K(\psi_{j+1}(P))] \ge 2$.  It follows that
\begin{align*}
[K(P)\col K] &=
  \Bigl( \prod_{j=0}^{J-1} [K(\psi_{j}(P))\col K(\psi_{j+1}(P))] \Bigr)
    [K(\psi_J(P))\col K] \\
  &\ge 2^J \rho(\map_{N-J}) \ge B_N.
\end{align*}
This completes the proof that the finite set $\Gamma$ contains the set defined
in \eqref{finiteset}.
\end{proof}

We now prove Theorem~\ref{Thm cool intro}.

\begin{proof}[Proof of Theorem~\ref{Thm cool intro}]
Since the algebraic set $\pc{3,a}$ has a geometrically irreducible
component of genus $0$ or $1$, there is a finite extension $L$ of $K$
for which $\pc{3,a}(L)$ is infinite.  Since the composite map
$\psi:=\delta_4\circ\delta_5\circ\dots\circ\delta_N$ defines an endomorphism
of $\Aff^2$ of degree $2^{N-3}$, if $\psi(P)\in\pc{3,a}(L)$ then
$[L(P)\col L]\le 2^{N-3}$.  But $\psi(P)\in\pc{3,a}(\bar \QQ)$ if and only if
$P\in\pc{N,a}(\bar \QQ)$.  This proves the first part of
Theorem~\ref{Thm cool intro}.

Now suppose $a$ is not a critical value of $f_c^j(0)$ for any
$2\le j\le N$, so $\pc{M,a}$ is nonsingular for $M\le N$, whence
$\cpc{M,a}$ is defined.
Consider the tower of smooth projective curves
\[
\cpc{N, a}\stackrel{\delta_N}{\longrightarrow}\cpc{N-1, a}
\stackrel{\delta_{N-1}}{\longrightarrow}
\ \cdots\ \stackrel{\delta_2}{\longrightarrow}\cpc{1, a},
\]
where $\delta_M \colon \cpc{M, a} \to \cpc{M-1, a}$ is the usual map.  By
Lemma~\ref{ramification}, the degree of the ramification divisor of $\delta_M$
is $2^{M-1}$, so
$\rho(\delta_M) = 2^{M-3}$.  If we apply Lemma~\ref{stlemma} to this tower of
curves, we have (in the notation of that lemma) $B_N=2^{N-3}$ and $b_N=1/2$.
Theorem~\ref{Thm cool intro} follows.
\end{proof}

\begin{remark}
By Remark~\ref{remark special case}, the set
$\pc{4,-1/4}(\QQ)$ is infinite, so the above proof implies that
$\pc{N,-1/4}(\bar\QQ)$ contains infinitely many points of degree at
most $2^{N-4}$.  Thus, the critical value hypothesis in
Theorem~\ref{Thm cool intro} cannot be removed.
\end{remark}

The following refinement of Theorem~\ref{Thm main intro} is our main result:

\begin{thm}[Uniform Boundedness for Pre-Images] \label{Uniform Bounds}
Fix a positive integer $B$, and put $N=\lfloor 4+\log_2(B)\rfloor$.
For any $a\in\bar{\QQ}$ such that $\pc{N,a}$ is nonsingular, there is an
integer $\kappa(B, a)$ with the following property: for any $c \in \bar\QQ$,
we have
\[
\#\left\{ x_0 \in \bar{\QQ} : [\QQ(a,c,x_0)\col \QQ(a)] \leq B \text{ and }
 f_c^M(x_0) = a \text{ for some } M\ge 1 \right\} \leq \kappa(B,a).
\]
Moreover, $\pc{N,a}$ is singular for fewer than $16B$ values
$a\in\bar{\QQ}$.
\end{thm}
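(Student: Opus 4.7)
The plan is to adapt the proof strategy of Theorem~\ref{Thm: No Exceptional Points}, replacing the appeal to Faltings' theorem with the Song--Tucker--Vojta result (Lemma~\ref{stlemma}) in order to accommodate bounded-degree points rather than $K$-rational points. Fix $a\in\bar\QQ$ with $\pc{N,a}$ nonsingular; by Propositions~\ref{smooth} and~\ref{irred}, each $\pc{M,a}$ with $1\le M\le N$ is nonsingular and geometrically irreducible, so $\cpc{M,a}$ is well-defined. For any $c\in\bar\QQ$, split the pre-image set into
\[
U_c := \{x_0 : f_c^M(x_0)=a\text{ for some }1\le M<N\},
\qquad
V_c := \{x_0 : f_c^M(x_0)=a\text{ for some }M\ge N\},
\]
each intersected with the bounded-degree condition.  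Since $f_c^M(z)-a$ has degree $2^M$ in $z$, we immediately get $\#U_c\le\sum_{M=1}^{N-1}2^M<2^N$.

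To bound $V_c$, I would apply Lemma~\ref{stlemma} to the tower $\cpc{N,a}\xrightarrow{\delta_N}\cpc{N-1,a}\xrightarrow{\delta_{N-1}}\dots\xrightarrow{\delta_2}\cpc{1,a}$ of length $N-1$, working over the base field $\QQ(a)$. By Lemma~\ref{ramification}, $\delta_M$ is a degree-$2$ map with $2^{M-1}$ simple ramification points, so $\rho(\delta_M)=2^{M-3}$; plugging into Lemma~\ref{stlemma} gives $B_{N-1}=2^{N-3}$ and $b_{N-1}=1/2$ (the second threshold being automatic, as any point has degree at least $1/2$ over its residue field image).  The key numerical check is that the choice $N=\lfloor 4+\log_2 B\rfloor$ forces $N-3=1+\lfloor\log_2 B\rfloor$ and hence $2^{N-3}=2\cdot 2^{\lfloor\log_2 B\rfloor}>B$; this strict inequality is exactly what the lemma needs. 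Therefore the set
\[
\mathcal S := \bigl\{(y_0,c)\in \pc{N,a}(\bar\QQ) : [\QQ(a,c,y_0)\col \QQ(a)]\le B\bigr\}
\]
is finite (after noting that the finitely many cusps of $\cpc{N,a}$ contribute only finitely many exceptions).

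Each $x_0\in V_c$ produces an associated point $(y_0,c)\in\pc{N,a}(\bar\QQ)$ via $y_0 := f_c^{M-N}(x_0)$, and $y_0\in\QQ(a,c,x_0)$ guarantees $(y_0,c)\in\mathcal S$.  Hence $V_c\subset\bigcup_{(y_0,c)\in\mathcal S}W(y_0,c)$, where $W(y_0,c)$ is the set of $x_0$ of degree at most $B$ over $\QQ(a)$ which eventually iterate to $y_0$ under $f_c$; apply Lemma~\ref{heights} with base field $\QQ(a,c,y_0)$ and target $y_0$ to see that each $W(y_0,c)$ is finite. Since $\mathcal S$ is a fixed finite set depending only on $a$ and $B$, the quantity $\sum_{(y_0,c)\in\mathcal S}\#W(y_0,c)$ is a finite bound independent of $c$, and combining with the earlier estimate on $U_c$ yields the desired $\kappa(B,a)$. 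Finally, Remark~\ref{Rem: Singular count} says $\pc{N,a}$ is singular for at most $2^N-N-1<2^N$ values $a\in\bar\QQ$, and $N\le 4+\log_2 B$ gives $2^N\le 16B$.

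The main obstacle is arithmetic-bookkeeping rather than conceptual: one must verify carefully that $B<2^{N-3}$ for the specific floor-logarithm definition of $N$, and that the uniformity in $c$ really does follow from the finiteness of $\mathcal S$ (so only finitely many $c$ contribute to $V_c\ne\emptyset$ and, for each of those, Lemma~\ref{heights} gives a finite, hence bounded, count of further pre-images). Everything else is a direct translation of the Theorem~\ref{Thm cool intro} argument from $\cpc{3,a}$-genus-one considerations up to the general tower.
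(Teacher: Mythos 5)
Your proof is correct and follows essentially the same strategy as the paper's: decompose the pre-image set into $U_c$ (iterates of level $<N$, trivially bounded by $2^N$) and $V_c$ (iterates of level $\ge N$), bound the relevant $(y_0,c)\in\pc{N,a}$ via the Song--Tucker--Vojta machinery, and finish with Lemma~\ref{heights}. The only cosmetic differences are that you invoke Lemma~\ref{stlemma} directly where the paper instead cites Theorem~\ref{Thm cool intro} (itself proved from Lemma~\ref{stlemma} in exactly the way you describe), and you make the numerical verification $B<2^{N-3}$ explicit, which the paper leaves implicit in its definition of $V_c$.
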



\begin{proof}
By Remark~\ref{Rem: Singular count}, there are at most $2^N-N-1$ values
$a\in\bar\QQ$ for which $\pc{N,a}$ is singular, which implies the final
statement.

Choose $a \in \bar{\QQ}$ such that $\pc{N,a}$ is nonsingular.
For any $c\in\bar{\QQ}$, the set described in the theorem is contained
in $U_c\cup V_c$, where
\begin{align*}
U_c &:= \{ x_0 \in \bar{\QQ} : f^M_c(x_0) = a \text{ for some } M<N \}, \\
V_c &:= \{ x_0 \in \bar{\QQ} : [\QQ(a,c,x_0)\col \QQ(a)] < 2^{N-3} \text{ and }
                             f^M_c(x_0) = a \text{ for some } M \ge N \}.
\end{align*}
By Theorem~\ref{Thm cool intro}, there are only finitely many points
$(y_0, c_0) \in \pc{N,a}(\bar{\QQ})$ for which
$[\QQ(a,y_0, c_0)\col  \QQ(a)] < 2^{N-3}$.
For each such $c_0$, Lemma~\ref{heights} implies $V_{c_0}$ is finite;
for any other $c$ we have $\#V_c=0$.  Letting $S$ be the maximum of
$\#V_c$ over all $c\in\bar\QQ$, it follows that $S$ is an integer
depending only on $N$ and $a$.  Since $f^M_c(z)$ has degree $2^M$, we have
$\# U_c< 2^N$, so $\#(U_c\cup V_c)<S+2^N$.
\end{proof}

Theorem~\ref{Uniform Bounds}, as well as several other results in this
paper, applies to values $a$ for which a particular $\pc{N,a}$ is
nonsingular.  We now describe a large class of such values $a$.

\begin{prop} \label{precritsmooth}
Let $\OO_K$ be the ring of integers in a number field $K$, and let $a \in K$.
Suppose $a$ is integral with respect to some prime ideal of $\OO_K$ lying over
$2$; in other words, $a = a_1 / a_2$ with $a_1, a_2 \in \OO_K$ and
$a_2 \not\in \pp$ for some $\pp \mid 2$.
Then $\pc{N,a}$ is nonsingular for every $N\ge 1$.
\end{prop}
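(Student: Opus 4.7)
The plan is to invoke the equivalence $(a) \Leftrightarrow (c)$ from Proposition~\ref{smooth}: it suffices to show that for every integer $j \ge 2$ and every $c_0 \in \bar K$ satisfying
\[
\left.\frac{\partial f_c^j(0)}{\partial c}\right|_{c=c_0} = 0,
\]
we have $f_{c_0}^j(0) \ne a$. Fix a prime $\pp$ of $\OO_K$ lying over $2$ with respect to which $a$ is integral, and extend the $\pp$-adic valuation to a valuation $v$ on $\bar K$; then $v(a) \ge 0$ while $v(2) > 0$. Writing $F_j(c) := f_c^j(0) \in \ZZ[c]$, which by an easy induction is monic in $c$ of degree $2^{j-1}$, my goal is to deduce $v(F_j(c_0)) < 0$ for every critical point $c_0$ of $F_j$, which is incompatible with $F_j(c_0) = a$.

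The first step is to show every such critical point satisfies $v(c_0) < 0$. Differentiating the recursion $F_j(c) = F_{j-1}(c)^2 + c$ yields $F_j'(c) = 2\,F_{j-1}(c)\,F_{j-1}'(c) + 1$, so $F_j'(c_0)=0$ forces
\[
2\,F_{j-1}(c_0)\,F_{j-1}'(c_0) = -1.
\]
If instead $v(c_0) \ge 0$, then both $F_{j-1}(c_0)$ and $F_{j-1}'(c_0)$ would lie in the valuation ring (being values at $c_0$ of polynomials with integer coefficients), so the left-hand side would have $v$-value at least $v(2) > 0$, contradicting $v(-1) = 0$.

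The remaining step is a leading-term dominance argument, which is the only place requiring a calculation. Set $m := -v(c_0) > 0$. Since $F_{j-1}$ is monic in $\ZZ[c]$ of degree $2^{j-2}$, and any lower-degree monomial $a_k c_0^k$ (with $a_k \in \ZZ$, $k < 2^{j-2}$) satisfies $v(a_k c_0^k) \ge -k m > -2^{j-2} m$, the strong triangle inequality gives $v(F_{j-1}(c_0)) = -2^{j-2} m$ exactly. Consequently $v(F_{j-1}(c_0)^2) = -2^{j-1} m < -m = v(c_0)$ for every $j \ge 2$, so $v(F_j(c_0)) = v(F_{j-1}(c_0)^2 + c_0) = -2^{j-1} m < 0$, contradicting $F_j(c_0) = a$. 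The only mild obstacle is verifying that the leading term really dominates, but once $v(c_0) < 0$ has been established this is a routine non-Archimedean estimate; no deeper ingredient is needed beyond Proposition~\ref{smooth}.
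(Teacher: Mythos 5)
Your proof is correct, and it relies on the same two key inputs as the paper's proof: the equivalence from Proposition~\ref{smooth}, and the identity $F_j'(c) = 2F_{j-1}(c)F_{j-1}'(c) + 1$ obtained by differentiating the recursion. But the logical flow is reversed. The paper starts from the assumption $F_j(c_0) = a$: since $F_j(c) - a$ is monic over the localization $R = (\OO_K)_\pp$, the element $c_0$ is integral over $R$, so $R[c_0]$ has a prime $\qq$ over $\pp$, and reducing $0 = 2F_{j-1}(c_0)F_{j-1}'(c_0) + 1$ modulo $\qq$ immediately gives $0 \equiv 1$. You instead start from $F_j'(c_0) = 0$, use the identity to force $v(c_0) < 0$ for any extension $v$ of the $\pp$-adic valuation to $\bar K$, and then conclude $v(F_j(c_0)) = -2^{j-1}m < 0 \le v(a)$ by leading-term dominance. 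Both routes encode the same underlying fact — that a $\pp$-integral $c_0$ cannot satisfy $2\cdot(\text{$\pp$-integral}) = -1$ — but the paper's version short-circuits the argument at the integrality step (never needing to compute $v(F_j(c_0))$), while yours trades the lying-over theorem for an explicit non-Archimedean estimate. Your estimate is carried out correctly, including the edge case $j=2$ where $F_{j-1}(c) = c$.
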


\begin{proof}
By Proposition~\ref{smooth}, it suffices to show there do not exist an integer
$2\le j\le N$ and an element $c_0\in\bar{\QQ}$ for which
\[
f_{c_0}^j(0) = a \quad\text{ and }\quad
\frac{\partial f_c^j(0)}{\partial c}\Big|_{c=c_0} = 0.
\]
Suppose $j$ and $c_0$ satisfy these conditions, and write
$P(c)=f_c^j(0)-a\in K[c]$.  Letting $R$ be the localization of $\OO_K$ at the
prime ideal $\pp$, our hypothesis on $a$ shows that $P$ is a monic polynomial
over $R$.  Since $P(c_0)=0$, the ring $R[c_0]$ is integral over $R$, and so
contains a prime ideal $\qq$ lying above $\pp$.

Writing $P(c)=Q(c)^2+c-a$ with $Q=f_c^{j-1}(0)\in\ZZ[c]$, we have
$P'(c)=2Q(c)Q'(c)+1$.  By assumption, $c_0$ is a double root of $P(c)$, and so
\[
0 = P'(c_0) = 2Q(c_0)Q'(c_0) + 1.
\]
Since $Q(c_0)Q'(c_0)\in R[c_0]$, we may reduce this equation modulo $\qq$ to
obtain the contradiction
\[
0 \equiv 1 \pmod{\qq}.
\]
Thus $\pc{N,a}$ is nonsingular.
\end{proof}

In particular, this result applies to any algebraic integer $a$, or more
generally to any ratio $a=\alpha/m$ with $\alpha$ an algebraic integer
and $m$ an odd integer.  For such values $a$, we know the genus and
gonality of $\cpc{N,a}$, and moreover we have uniform bounds on the
pre-images of $a$ under the various maps $f_c$.

\begin{remark} \label{rempat}
Our results are related to the study of uniform lower bounds on
the canonical height $\hat{h}$ associated to $f_c$, as $c$ varies.
A special case of a conjecture of Silverman \cite[Conj.~4.98]{ads}
asserts that, for every number field $K$, there exists a constant
$\epsilon=\epsilon(K)>0$ such
that either $\hat{h}(\alpha)=0$ or $\hat{h}(\alpha)\geq \epsilon
\max(1,h(c))$ for
each $\alpha, c\in K$.  (This is a dynamical analogue of a conjecture of
Lang's on heights of non-torsion rational points on elliptic curves.)
If this conjecture were true, we could prove
Theorem~\ref{Thm: No Exceptional Points} without using Faltings' theorem,
so long as we assume that $a$ is not preperiodic for $f_c$.  For such
$a$ and $c$, if $f_c^N(x_0)=a$ then $x_0$ is not preperiodic for $f_c$,
so $\hat h(x_0)\ne 0$ and thus
\[
2^N \epsilon \max(1,h(c))\leq 2^N\hat{h}(x_0)=\hat{h}(a)\leq h(a)+
h(c)+\log 2,
\]
where the last inequality follows from decomposing the heights into
sums of local heights.
This bounds $N$ in terms of $K$, $h(a)$, and $\epsilon$;
the rest of the proof is as before.  Partial results in the direction of
Silverman's conjecture (see \cite{ing}) imply an effective version of
Theorem~\ref{Thm main intro} if the bound $\kappa$ is allowed to depend on the
number of primes of $K$ at which $c$ is not integral (in addition to $BD$ and
$a$).  Of course, this is much weaker than Theorem~\ref{Thm main intro},
in which $\kappa$ does not depend on $c$.

In the other direction, since $\cpc{3,0}$ is a rank-one elliptic
curve over $\QQ$, with unbounded real locus, there are infinitely many
$(x_0,c)\in\pc{3,0}(\QQ)$
with $|c|>4$.
%
%
For such $(x_0,c)$ we have $f_c^4(x_0)=f_c(0)=c$, so
\cite[Lemmas~3 and 6]{ing} imply
\[
\hat{h}(x_0)=2^{-4}\hat{h}(c)\leq
\frac{1}{16}h(c)+\frac{\log(5)-2\log(2)}{16}.
\]
Thus, if $\epsilon(\QQ)$ exists then it is at most $1/16$.
A similar construction was given in \cite[\S 5]{ing}, using the
points $(k,-k^2-k+1)$ on $\pc{2,-3k+2}$ to deduce an upper bound
of $1/8$; note that that construction exhibits an infinite family of integral
points, whereas each curve $\cpc{2,a}$ has only finitely many such
points (since it is a genus zero curve with two rational points at
infinity).
%
%
\end{remark}


\section*{Acknowledgements}

This project began at the American Institute of
Mathematics, during the workshop on ``The Uniform Boundedness Conjecture in
Arithmetic Dynamics''.  We thank AIM for the opportunity to enjoy its
productive
atmosphere.  We also thank the workshop organizers and participants for
contributing to this stimulating week.  We especially thank the participants
with whom we discussed this work:
Jordan Ellenberg, Susan Goldstine, Bjorn Poonen, Joseph Silverman,
Vijay Sookdeo, Michael Stoll, and Justin Sukiennek.


\end{document}
